\documentclass[dvips,preprint]{imsart}

\RequirePackage[OT1]{fontenc} \RequirePackage{amsthm,amsmath}
\RequirePackage[numbers]{natbib}
\RequirePackage[colorlinks,citecolor=blue,urlcolor=blue]{hyperref}
\RequirePackage{hypernat}
\usepackage{amsmath}
\usepackage{amssymb}
\usepackage{amsmath}
\usepackage{amsfonts}
\usepackage[american]{babel}
\usepackage{graphicx}
\usepackage{flafter}
\usepackage[section]{placeins}
\arxiv{math.PR/0000000}

\startlocaldefs

\def\P{{\mathbb P}}
\def\E{{\mathbb E }}

\def\Bbb E{\mathbb{E}}
\def\Bbb R{\mathbb{R}}
\parskip=0.03truein
\hfuzz=0.3truein

\newtheorem{proposition}{Proposition}
\newtheorem{lemma}{Lemma}
\newtheorem{theorem}{Theorem}
\newtheorem{corollary}{Corollary}

\makeatletter \@addtoreset{equation}{section}

\makeatother

\font\tencmmib=cmmib10 \skewchar\tencmmib '60
\newfam\cmmibfam
\textfont\cmmibfam=\tencmmib

\font\tenmsb=msbm10 
\def\Bbb#1{\hbox{\tenmsb#1}}


\def\lessim{\ \lower4pt\hbox{$
\buildrel{\displaystyle <}\over\sim$}\ }
\def\gessim{\ \lower4pt\hbox{$\buildrel{\displaystyle >}
\over\sim$}\ }


%
\def\AA{{\cal A} }

\def\go0{\to 0}

\def\leftitem#1{\item{\hbox to\parindent{\enspace#1\hfill}}}

\def\sg{\sigma}

\def\sg2{\sigma^2}

\def\__{_{\infty}}

\numberwithin{equation}{section} \theoremstyle{plain}

\newcommand{\1}{{\rm 1}\kern-0.24em{\rm I}}
\def\E{\mathbb E}
\def\R{\mathbb R}
\newtheorem{assumption}{Assumption}

\newtheorem{defi}{Definition}
\endlocaldefs

\begin{document}

\begin{frontmatter}
\title{High-dimensional covariance matrix estimation with missing observations} \runtitle{Covariance matrix estimation with missing observations}

\begin{aug}
\author{\fnms{Karim} \snm{Lounici}\thanksref{m1}\ead[label=e2]{klounici@math.gatech.edu}}
\thankstext{m1}{Supported in part by NSF Grant DMS-11-06644 and Simons foundation Grant 209842}
\runauthor{K. Lounici}

\affiliation{Georgia Institute of Technology\thanksmark{m1}}

\address{School of Mathematics\\
Georgia Institute of Technology\\
Atlanta, GA 30332-0160\\
\printead{e2}\\
}
\end{aug}

\begin{abstract}\
In this paper, we study the problem of high-dimensional approximately low-rank
covariance matrix estimation with missing observations. We propose a
simple procedure computationally tractable in high-dimension and
that does not require imputation of the missing data. We establish
non-asymptotic sparsity oracle inequalities for the estimation of
the covariance matrix with the Frobenius and spectral norms, valid
for any setting of the sample size and the dimension of the
observations. We further establish minimax lower bounds showing that
our rates are minimax optimal up to a logarithmic factor.
\end{abstract}

\begin{keyword}[class=AMS]
\kwd[Primary ]{62H12} 
\end{keyword}

\begin{keyword}
\kwd{Covariance matrix} \kwd{low-rank matrix estimation}
\kwd{Missing observations} \kwd{optimal
rate of convergence}\kwd{noncommutative Bernstein inequality}
\kwd{Lasso}
\end{keyword}

\end{frontmatter}

\section{Introduction}

Let $X,X_1,\ldots,X_n \in \R^p$ be i.i.d. zero mean vectors with
unknown covariance matrix $\Sigma = \E X \otimes X$. Our objective
is to estimate the unknown covariance matrix $\Sigma$ when the
vectors $X_1,\ldots,X_n$ are partially observed, that is, when some
of their components are not observed. More precisely, we consider
the following framework. Denote by $X^{(j)}_i$ the $j$\emph{-th}
component of the vector $X_i$. We assume that each component
$X^{(j)}_i$ is observed independently of the others with probability
$\delta\in (0,1]$. Note that $\delta$ can be easily estimated by the
proportion of observed entries. Therefore, we will assume in this
paper that $\delta$ is known. Note also that the case $\delta=1$
corresponds to the standard case of fully observed vectors. Let $(\delta_{i,j})_{1\leq i \leq n,1\leq j \leq p}$ be a
sequence of i.i.d. Bernoulli random variables with parameter
$\delta$ and independent from $X_1,\ldots,X_n$. We observe $n$
i.i.d. random vectors $Y_1,\ldots,Y_n\in \R^p$ whose components
satisfy
\begin{equation}\label{equationY}
Y_i^{(j)} = \delta_{i,j}X_i^{(j)},\quad 1\leq i \leq n,\,1\leq j
\leq p.
\end{equation}
We can think of the $\delta_{i,j}$ as masked variables. If
$\delta_{i,j}=0$, then we cannot observe the $j$\emph{-th} component
of $X_i$  and the default value $0$ is assigned to $Y_i^{(j)}$. Our goal is then to estimate $\Sigma$ given the partial
observations $Y_1,\ldots,Y_n$.

The statistical problem of covariance estimation with missing
observations is fundamental in multivariate statistics since it is
often used as the first step to retrieve information in numerous
applications where datasets with missing observations are common:
\begin{enumerate}
  \item Climate studies: $n$ is the number of time points and $p$ the number of observations stations, which may sometimes fail to produce an observation due to break down of measure instruments. As a consequence, the generated datasets usually contain missing
  values.
  \item Gene expression micro-arrays: $n$ is the number of measurements and $p$ the number of tested genes. Despite the improvement of genes expression techniques, the generated datasets frequently contain missing values with up to $90\%$ of genes affected.
  \item Cosmology: $n$ is the number of images produced by a telescope and $p$ is the number of pixels per image. With the development of very large telescopes and wide sky surveys, the generated datasets are huge but usually contain missing observations due to partial sky coverage or defective pixel.
\end{enumerate}

One simple strategy to deal with missing data is to exclude from the
analysis any variable for which observations are missing, thus
restricting the analysis to a subset of fully observed variables. In
gene expression data where $90\%$ of the genes are affected by
missing values, we would be left with too few variables so that the
legitimacy of the statistical analysis becomes questionable. Also,
discarding variables with very few missing observations is a waste
of available information. Existing procedures involve complex
imputation techniques to fill in the missing values through
computationally intensive implementation of the EM algorithm, see
\cite{schneider} and the references cited therein for more details.
In this paper, we propose a simple procedure computationally
tractable in high-dimension that does not require to imput missing
observations or to discard any available observation to recover the
covariance matrix $\Sigma$.

Contemporary datasets are huge with both large sample size $n$ and
dimension $p$ and typically $p\gg n$. Consequently, a question of
considerable practical interest is to perform dimension reduction,
that is finding a good low-dimensional approximation for these huge
datasets. This recent paradigm where high-dimensional objects of
interest admit in fact a small intrinsic dimension has produced
spectacular results in several fields, such as compressed sensing
where it is possible to recover $s$-sparse vectors of dimension $p$
with only $n = \mathcal{O}\left(s \log(ep/s)\right)$ measurements
provided these measurements are carried out properly, see
\cite{BRT09,CT07,DT05,Kolstflour,kol_dantz,L08} and the references
cited therein for more details. An analogous result holds in matrix
completion where recovery of a low-rank matrix $A\in \R^{p\times p}$ via nuclear norm minimization
is possible with as few as $\mathcal{O}\left(pr\log^2 p\right)$
observed entries where $r$ is the rank of $A$, provided the matrix
of interest $A$ satisfies some incoherence condition, see
\cite{cp09,Candes_Recht,Candes_Tao,Gross-2,Keshavan,KLT,Kol10,RFP10,rohde}
for more details. See also \cite{BSW11,Klo11} for rank minimization approach. A popular dimension reduction technique for
covariance matrices is Principal Component Analysis (PCA), which
exploits the spectrum of the sample covariance matrix. In the
high-dimensional setting, \cite{J01} showed that the standard PCA
procedure is bound to fail since the sample covariance spectrum is
too spread out.

Several alternatives have been studied in the literature to provide
better estimates of the covariance matrix in the high-dimensional
setting. A popular approach in Gaussian graphical models consists in
estimating the inverse of the covariance matrix (called
concentration matrix) since it admits a naturally sparse (or approximately sparse) structure
if the dependence graph is itself sparse. See \cite{BGA08,FHT08,MB06,RWRY11,YL07,CLL11} and the references cited therein for more details. A limitation of this approach is that it does not apply to low rank matrices $\Sigma$ since the concentration matrix does not exist in this case. 
An other popular approach assumes that the unknown covariance matrix is sparse,that is most of the entries are
exactly or approximately zero and then proposes to perform either
entrywise thresholding or tapering of the sample covariance matrix
\cite{BL08,CL11,CZZ10,Karoui08b,RBLZ08,RLZ09}. Note that the
sparsity notion adopted in this approach is not adapted to strongly
correlated datasets with dense covariance matrix.

In random matrix theory, an important line of work,
\cite{Karoui08a,J01,JM11} and the references cited therein studied
the asymptotic distribution of the sample covariance matrix
eigenvalues for different settings of $n$ and $p$. See also
\cite{vershynin} for a very nice survey of existing non-asymptotic
results on the spectral norm deviation of the sample covariance
matrix from its population counterpart. In this paper, we adopt this
approach and we will provide further details as we present our
results.

Note that the results derived in the works cited above do not cover
datasets with missing observations. For instance, when the data
contains no missing observation ($\delta=1$), \cite{vershynin}
established a non-asymptotic control on the stochastic deviation
$\|\Sigma_n - \Sigma\|_\infty$ of the empirical covariance matrix
$\Sigma_n = \frac{1}{n}\sum_{i=1}^n X_i \otimes X_i$ provided some
tails conditions are satisfied by the common distribution of
$X_1,\ldots,X_n$. Exploiting these results, it is possible to
establish oracle inequalities for the covariance version of the
matrix Lasso estimator
\begin{equation}\label{matrixLasso}
\hat \Sigma^L = \mathrm{argmin}_{S\in \mathcal{S}_p}\|\Sigma_n -
S\|_2^2+\lambda \|S\|_1,
\end{equation}
where $\mathcal{S}_p$ is the set of $p\times p$
positive-semidefinite symmetric matrices, $\|S\|_2$ and $\|S\|_1$
are respectively the Frobenius and nuclear norm of $S$ and
$\lambda>0$ is a regularization parameter that should be chosen of
the order of magnitude of $\|\Sigma_n - \Sigma\|_\infty$. This
estimator is the covariance version of the matrix Lasso estimator
initially introduced in the matrix regression framework, see
\cite{KLT,rohde} and the references cited therein. To the best of
our knowledge, the procedure (\ref{matrixLasso}) has not been
studied in the covariance estimation problem.

When the data contains missing observations ($\delta<1$), we no longer
have access to $\Sigma_n$. Given the observations $Y_1,\ldots,Y_n$,
we can build the following empirical covariance matrix
$$
\Sigma_n^{(\delta)} = \frac{1}{n}\sum_{i=1}^n Y_i \otimes Y_i.
$$
In this case, a naive approach to derive oracle inequalities consists in computing the matrix Lasso estimator
(\ref{matrixLasso}) with $\Sigma_n$ replaced by
$\Sigma_n^{(\delta)}$. Unfortunately this approach is bound to fail
since $\Sigma_n^{(\delta)}$ is not a good estimator of $\Sigma$ when
$\delta<1$. Indeed, some elementary algebra gives that $\mathbb
E\left( \Sigma_n^{(\delta)}\right) = \Sigma^{(\delta)}$ with
\begin{equation*}
\Sigma^{(\delta)} = (\delta-\delta^2) \mathrm{diag}(\Sigma) +
\delta^2 \Sigma,
\end{equation*}
where $\mathrm{diag}(\Sigma)$ is the $p\times p$ diagonal matrix
obtained by putting all the non-diagonal entries of $\Sigma$ to
zero. When $\delta=1$, we see that $\Sigma^{(1)}=\Sigma$ and
$\Sigma_n^{(1)}=\Sigma_n$. However, when observations are missing
($\delta<1$), $\Sigma^{(\delta)}$ can be very far from $\Sigma$.
Hence, $\Sigma_{n}^{(\delta)}$ will be a poor estimator of $\Sigma$
since it concentrates around its mean $\Sigma^{(\delta)}$ under
suitable tail conditions on the distribution of $X$. Consequently,
the stochastic deviation $\|\Sigma_n^{(\delta)}-\Sigma\|_\infty$
will be too large and the matrix Lasso estimator (\ref{matrixLasso})
with $\Sigma_n$ replaced by $\Sigma_n^{(\delta)}$, which requires
$\lambda$ to be of the order of magnitude of
$\|\Sigma_n^{(\delta)}-\Sigma\|_\infty$, will perform poorly since
its rate of estimation grows with $\lambda$.

We present now our reconstruction procedure based on the following
simple observation
\begin{equation}\label{Sigmarecons}
\Sigma = (\delta^{-1} -
\delta^{-2})\mathrm{diag}\left(\Sigma^{(\delta)} \right) +
\delta^{-2}\Sigma^{(\delta)},\quad \forall 0<\delta\leq 1.
\end{equation}
Therefore, we can define the following unbiased estimator of $\Sigma
$ when the data set contains missing observations
\begin{equation}\label{Sigmaemprecons}
\tilde\Sigma_n = (\delta^{-1} -
\delta^{-2})\mathrm{diag}\left(\Sigma_n^{(\delta)}\right) +
\delta^{-2}\Sigma_n^{(\delta)}.
\end{equation}
Our estimator is then solution of the following penalized empirical
risk minimization problem:
\begin{equation}\label{nuclearnormest}
\hat\Sigma^{\lambda} = \mathrm{argmin}_{S\in \mathcal
S_p}\left\|\tilde\Sigma_n - S\right\|_2^2 + \lambda \,\|S\|_1,
\end{equation}
where $\lambda>0$ is a regularization parameter to be tuned
properly. We note that this simple procedure can be computed
efficiently in high-dimension since $\hat\Sigma^{\lambda}$ is
solution of a convex minimization problem. The optimal choice of the
tuning parameter $\lambda$ is of the order of magnitude of the
stochastic deviation $\|\tilde\Sigma_n - \Sigma\|_\infty$.
Therefore, in order to order to establish sharp oracle inequalities
for (\ref{nuclearnormest}), we need first to study the deviations of
$\|\tilde\Sigma_n - \Sigma\|_\infty$. This analysis is more
difficult as compared to the study of $\|\Sigma_n - \Sigma\|_\infty$
since we need to derive the sharp scaling of $\|\tilde\Sigma_n -
\Sigma\|_\infty$ with $\delta$.

The rest of the paper is organized as follows. In Section
\ref{tools}, we recall some tools and definitions. In Section
\ref{oracle}, we establish oracle inequalities for the Frobenius and
spectral norms for our procedure (\ref{nuclearnormest}) and also
propose a data-driven choice of the regularization parameter. In
section \ref{lower}, we establish minimax lower bounds for data with
missing observations $\delta\in (0,1]$, thus showing that our
procedures are minimax optimal up to a logarithmic factor. Finally,
Section \ref{proof} contains all the proofs of the paper.

We emphasize that the results of this paper are non-asymptotic in
nature, hold true for any setting of $n,p$, are minimax optimal (up
to a logarithmic factor) and do not require the unknown covariance
matrix $\Sigma$ to be low-rank. We note also that to the best of our
knowledge, there exists in the literature no minimax lower bound
result for statistical problem with missing observations.

\section{Tools and definitions}\label{tools}

The $l_q$-norms of a vector
$x=\left(x^{(1)},\cdots,x^{(p)}\right)^{\top}\in \R^p$ is given by
$$
|x|_q = \left(\sum_{j=1}^p |x^{(j)}|^q\right)^{1/q},\; \text{for}\;1
\leq q < \infty, \quad \text{and}\quad |x|_\infty = \max_{1\leq j
\leq p}|x^{(j)}|.
$$

Denote by $\mathcal S_p$ the set of $p\times p$ symmetric
positive-semidefinite matrices. Any matrix $A\in \mathcal S_p$
admits the following spectral representation
$$
A = \sum_{j=1}^r \sigma_j(A) u_j(A)\otimes u_j(A)
$$
where $r=\mathrm{rank}(A)$ is the rank of $A$, $\sigma_1(A)\geq
\sigma_2(A)\geq \cdots \geq \sigma_r(A)> 0$ are the nonzero
eigenvalues of $A$ and $u_1(A),\ldots,u_r(A)\in \R^p$ are the
associated orthonormal eigenvectors (we also set $\sigma_{r+1}(A) =
\cdots=\sigma_p(A)=0$). The linear vector space $L$ is the linear
span of $\{u_1(A),\ldots,u_r(A)\}$ and is called support of $A$. We
will denote respectively by $P_L$ and $P_L^{\perp}$ the orthogonal projections onto $L$ and
$L^{\perp}$.

The Schatten $q$-norm of $A\in \mathcal S_p$ is defined by
$$
\|A\|_q = \left(\sum_{j=1}^p |\sigma_j(A)|^q\right)^{1/q},\;
\text{for}\;1 \leq q < \infty, \quad \text{and}\quad \|A\|_{\infty}
= \sigma_1(A),
$$
Note that the trace of any $S\in \mathcal S_p$ satisfies
$\mathrm{tr}(S) = \|S\|_1$.

Recall the {\it trace duality} property:
$$
\left| \mathrm{tr}(A^\top B) \right| \leq \|A\|_1 \|B\|_{\infty},
\quad \forall A,B\in \R^{p\times p}.
$$

We will also use the fact that the subdifferential of the convex
function $A\mapsto \|A\|_1$ is the following set of matrices :
\begin{equation}
\label{subdiff}
\partial \|A\|_1=\Bigl\{\sum_{j=1}^r u_j(A)
\otimes u_j(A) + P_{L}^{\perp}W P_{L}^{\perp}:\ \|W\|_\infty\leq 1
\Bigr\},
\end{equation}
(cf. \cite{watson}).

We recall now the definition and some basic properties of
sub-exponential random vectors.
\begin{defi}
The $\psi_\alpha$-norms of a real-valued random variable $V$ are
defined by
$$
\|V\|_{\psi_\alpha} = \inf\left\lbrace u>0: \mathbb E
\exp\left(|V|^\alpha/u^\alpha
 \right)\leq 2 \right\rbrace,\quad \alpha \geq 1.
$$
We say that a random variable $V$ with values in $\R$ is
sub-exponential if $\|V\|_{\psi_\alpha}<\infty$ for some $\alpha
\geq 1$. If $\alpha = 2$, we say that $V$ is sub-gaussian.
\end{defi}
We recall some well-known properties of sub-exponential random
variables:
\begin{enumerate}
\item For any real-valued random variable $V$ such that
$\|V\|_{\alpha}<\infty$ for some $\alpha \geq 1$, we have
\begin{equation*}
\sup_{m\geq 1} m^{-\frac{1}{\alpha}}\mathbb E \left(|V|^m
\right)^{1/m}\leq C \|V\|_{\psi_{\alpha}},
\end{equation*}
where $C>0$ can depend only on $\alpha$.
\item If a real-valued random variable $V$ is sub-gaussian, then
$V^2$ is sub-exponential. Indeed, we have
\begin{equation*}
\|V^2\|_{\psi_1}\leq 2 \|V\|_{\psi_2}^2.
\end{equation*}
\end{enumerate}

\begin{defi}
A random vector $X\in \R^p$ is sub-exponential if $\langle X,x\rangle$ are sub-exponential random variables for all
$x\in \R^p$. The $\psi_\alpha$-norms of a random vector $X$ are
defined by
$$
\|X\|_{\psi_\alpha} = \sup_{x\in\R^{p}:|x|_2=1}\|\langle
X,x\rangle\|_{\psi_\alpha},\quad \alpha\geq 1.
$$
\end{defi}

We recall the Bernstein inequality for sub-exponential real-valued
random variables (see for instance Corollary 5.17 in \cite{vershynin})
\begin{proposition}\label{Bernstein}
  Let $Y_1,\ldots,Y_n$ be independent centered sub-exponential
  random variables, and $K = \max_{i}\|Y_i\|_{\psi_1}$. Then for every $t\geq 0$, we have
  with probability at least $1-e^{-t}$
$$
\left| \frac{1}{n}\sum_{i=1}^n  Y_i  \right| \leq C K
\left(\sqrt{\frac{t}{n}}\vee \frac{t}{n}\right),
$$
where $C>0$ is an absolute constant.
\end{proposition}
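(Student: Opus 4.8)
I would prove this standard Bernstein inequality for sums of independent sub-exponential variables by the classical exponential-moment (Chernoff) method. The only input beyond the general method is a moment generating function estimate for a single centered $\psi_1$ variable, and that is the step carrying the weight; everything after it is the routine two-regime optimization.

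\emph{Step 1 (MGF of one variable).} Fix a centered $Y_i$ with $\|Y_i\|_{\psi_1}\le K$. Property~1 of sub-exponential variables recalled above gives $\E|Y_i|^m\le (CKm)^m$ for every integer $m\ge 1$, hence, using $m^m\le e^m m!$, $\E|Y_i|^m\le (CeK)^m m!$. Since $\E Y_i=0$, expanding the exponential yields, for $|\lambda|\le (2CeK)^{-1}$,
$$
\E e^{\lambda Y_i}=1+\sum_{m\ge2}\frac{\lambda^m\,\E Y_i^m}{m!}\ \le\ 1+\sum_{m\ge2}(CeK|\lambda|)^m\ \le\ 1+2(CeK\lambda)^2\ \le\ e^{2(CeK\lambda)^2},
$$
the geometric series being summed in the penultimate inequality.

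\emph{Step 2 (tensorize and optimize).} By independence, $\E\exp\bigl(\lambda\sum_{i=1}^n Y_i\bigr)\le\exp\bigl(2(CeK)^2 n\lambda^2\bigr)$ on the same range of $\lambda$, so Markov's inequality gives $\P\bigl(\sum_{i=1}^n Y_i\ge u\bigr)\le\exp\bigl(-\lambda u+2(CeK)^2 n\lambda^2\bigr)$ for $0\le\lambda\le(2CeK)^{-1}$. I would then minimize over $\lambda$ in two regimes: for $u\le 2CeKn$ the unconstrained minimizer $\lambda^\ast=u/(4(CeK)^2 n)$ is admissible and produces the sub-gaussian tail $\exp(-u^2/(8(CeK)^2 n))$; for $u> 2CeKn$ the choice $\lambda=(2CeK)^{-1}$ produces the sub-exponential tail $\exp(-u/(4CeK))$. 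Together these give $\P(\sum_i Y_i\ge u)\le\exp\bigl(-c\min\{u^2/(nK^2),\,u/K\}\bigr)$ for an absolute $c>0$; the same bound for $-\sum_i Y_i$ and a union bound (the factor $2$ absorbed into the constant) covers $|\sum_i Y_i|$.

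\emph{Step 3 (rewrite in the stated form).} Setting $u=n\eps$ and inverting $t=cn\min\{\eps^2/K^2,\eps/K\}$, the two branches meet at $\eps=K$, i.e.\ $t=cn$, and give $\eps\asymp K\sqrt{t/n}$ for $t\le cn$ and $\eps\asymp Kt/n$ for $t>cn$; in both cases $\eps\le CK(\sqrt{t/n}\vee t/n)$, which is exactly the asserted bound with probability at least $1-e^{-t}$. The main obstacle is Step~1 — the MGF estimate — after which the argument is the textbook Chernoff computation (see Corollary~5.17 in \cite{vershynin}).
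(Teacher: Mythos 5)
Your argument is correct and is essentially the same proof as the one behind the result the paper cites without proof (Corollary 5.17 in \cite{vershynin}): the $\psi_1$-norm-to-moments bound, the resulting MGF estimate $\E e^{\lambda Y_i}\le e^{c K^2\lambda^2}$ on $|\lambda|\lesssim K^{-1}$, and the two-regime Chernoff optimization are exactly the standard route, and each step checks out. The one (harmless) caveat, which is really a defect of the proposition's statement rather than of your proof, is that absorbing the union-bound factor $2$ into the constant gives $1-2e^{-t}$, so the ``$1-e^{-t}$ for every $t\ge 0$'' form is only literally valid for $t$ bounded away from $0$ --- which is all the paper ever uses, since it always takes $t\gtrsim \log(2p)$.
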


The following proposition is the matrix version of Bernstein's
inequality for bounded random matrices \cite{Ahlswede} (see also Corollary 9.1 in \cite{tropp10}).

\begin{proposition}\label{prop:Bernstein_bounded}
Let $Z_1,\ldots,Z_n$ be symmetric independent random matrices in
$\R^{p\times p}$ that satisfy $\E(Z_i) = 0$ and $\|Z_i\|_\infty\leq
U$ almost surely for some constant $U$ and all $i=1,\dots,n$. Define
$$
\sigma_Z = \Big\|\frac1{n}\sum_{i=1}^n\E Z_i^2\Big\|_\infty.
$$
Then, for all $t>0,$ with probability at least $1-e^{-t}$ we have
$$
\left\| \frac{Z_1 + \cdots + Z_n}{n}  \right\|_\infty \leq
2\max\left\{ \sigma_Z\sqrt{\frac{t + \log (2p)}{n}}\,, \ \ U \frac{t
+ \log (2p)}{n} \right\}.
$$
\end{proposition}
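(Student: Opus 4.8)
The plan is to follow the matrix Laplace transform (exponential Markov) method of Ahlswede--Winter and Tropp. Write $S_n=Z_1+\cdots+Z_n$ and $\tau=t+\log(2p)$. Each $Z_i$ is symmetric, hence so is $S_n$, and $\|S_n\|_\infty=\max\{\lambda_{\max}(S_n),\,\lambda_{\max}(-S_n)\}$; since $\{-Z_i\}$ satisfies exactly the same hypotheses as $\{Z_i\}$ (same bound $U$, same second moments), it suffices to bound $\P(\lambda_{\max}(S_n)\ge s)$ for a suitable level $s$ and then double the resulting probability. For any $\theta>0$, applying Markov's inequality to $e^{\theta\lambda_{\max}(S_n)}$ and using $e^{\theta\lambda_{\max}(S_n)}=\lambda_{\max}(e^{\theta S_n})\le\mathrm{tr}\,e^{\theta S_n}$ (legitimate since $e^{\theta S_n}\succeq0$) gives
$$
\P\bigl(\lambda_{\max}(S_n)\ge s\bigr)\ \le\ e^{-\theta s}\,\E\,\mathrm{tr}\,e^{\theta S_n}.
$$

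The technical heart is the subadditivity of the matrix cumulant generating function, $\E\,\mathrm{tr}\,\exp(\theta S_n)\le\mathrm{tr}\,\exp\bigl(\textstyle\sum_{i=1}^n\log\E\,e^{\theta Z_i}\bigr)$. I would obtain it by peeling the summands off one at a time: conditionally on $Z_1,\dots,Z_{n-1}$ the matrix $W=\sum_{i<n}\theta Z_i$ is fixed and $Z_n$ is independent of it, so Lieb's concavity theorem --- the map $A\mapsto\mathrm{tr}\exp(W+\log A)$ is concave on positive definite $A$ --- together with Jensen's inequality gives $\E_{Z_n}\mathrm{tr}\exp(W+\theta Z_n)\le\mathrm{tr}\exp\bigl(W+\log\E_{Z_n}e^{\theta Z_n}\bigr)$, and iterating this yields the claim. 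I expect this to be the main obstacle, as it rests on the nontrivial Lieb inequality; it is precisely this step that is responsible for the sharp ``norm of the sum'' variance proxy appearing in $\sigma_Z$. (The cruder argument via the Golden--Thompson inequality $\mathrm{tr}\,e^{A+B}\le\mathrm{tr}(e^Ae^B)$ of \cite{Ahlswede} would only give the weaker proxy $\tfrac1n\sum_i\|\E Z_i^2\|_\infty$.)

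Next I would control each factor $\E\,e^{\theta Z_i}$. The scalar function $y\mapsto(e^y-1-y)/y^2$ is nondecreasing on $\R$, so $e^{\theta z}\le 1+\theta z+g(\theta)\,z^2$ for every $z\in[-U,U]$, with $g(\theta)=(e^{\theta U}-\theta U-1)/U^2$; applying this identity through the spectral calculus to $Z_i$ (whose spectrum lies in $[-U,U]$) and taking expectations, the mean-zero hypothesis gives $\E\,e^{\theta Z_i}\preceq I+g(\theta)\,\E Z_i^2$. Since $\log(I+B)\preceq B$ for $B\succeq0$ and $M\mapsto\mathrm{tr}\exp M$ is monotone for the semidefinite order, combining with the previous step yields
$$
\E\,\mathrm{tr}\,\exp(\theta S_n)\ \le\ \mathrm{tr}\,\exp\Bigl(g(\theta)\textstyle\sum_{i=1}^n\E Z_i^2\Bigr)\ \le\ p\,\exp\Bigl(g(\theta)\,\bigl\|\textstyle\sum_{i=1}^n\E Z_i^2\bigr\|_\infty\Bigr),
$$
using $\mathrm{tr}\exp M\le p\,e^{\lambda_{\max}(M)}$. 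With the analogous bound for $-S_n$, this gives, for every $\theta>0$ and $v:=\bigl\|\sum_{i=1}^n\E Z_i^2\bigr\|_\infty$, that $\P(\|S_n\|_\infty\ge s)\le 2p\exp\bigl(-\theta s+g(\theta)\,v\bigr)$.

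Finally I would optimize over $\theta>0$. The minimizer is $\theta^\ast=U^{-1}\log(1+sU/v)$, which turns the exponent into the Bennett-type quantity $-\frac{v}{U^2}\,h(\tfrac{sU}{v})$ with $h(u)=(1+u)\log(1+u)-u$; the elementary bound $h(u)\ge\frac{u^2}{2(1+u/3)}$ then gives $\P(\|S_n\|_\infty\ge s)\le 2p\exp\bigl(-\frac{3s^2}{6v+2sU}\bigr)$. Choosing $s$ so that the right-hand side is at most $e^{-t}$ --- equivalently, so that $3s^2-2U\tau s-6v\tau\ge0$ --- and distinguishing the sub-Gaussian regime $U\tau\le\sqrt{v\tau}$ (take $s=2\sqrt{v\tau}$) from the sub-exponential regime $U\tau>\sqrt{v\tau}$ (take $s=2U\tau$) shows that $\|S_n\|_\infty\le 2\max\{\sqrt{v\tau},\,U\tau\}$ with probability at least $1-e^{-t}$. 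Dividing through by $n$ and recalling the definition of $\sigma_Z$ gives the asserted inequality.
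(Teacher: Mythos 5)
Your argument is correct, but there is nothing in the paper to compare it against step by step: Proposition~\ref{prop:Bernstein_bounded} is stated as a quoted result, with the paper simply citing Ahlswede--Winter and Corollary~9.1 of Tropp and giving no proof. What you have written is a faithful and essentially complete reproduction of Tropp's proof of the matrix Bernstein inequality: symmetrization over $\pm S_n$, the trace Laplace transform bound, subadditivity of the matrix cumulant generating function via Lieb's concavity theorem (you are right that this, rather than Golden--Thompson, is what delivers the variance proxy $\|\sum_i\E Z_i^2\|_\infty$ instead of $\sum_i\|\E Z_i^2\|_\infty$), the moment bound $\E e^{\theta Z_i}\preceq I+g(\theta)\E Z_i^2$, and the Bennett-to-Bernstein optimization; your case analysis $3s^2-2U\tau s-6v\tau\ge0$ with $s=2\sqrt{v\tau}$ or $s=2U\tau$ checks out and yields exactly the constant $2$ in the statement. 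Two small remarks. First, when you pass from $\E e^{\theta Z_i}\preceq I+g(\theta)\E Z_i^2$ to a bound on $\sum_i\log\E e^{\theta Z_i}$ you are implicitly using that the matrix logarithm is operator monotone (in addition to the scalar fact $\log(I+B)\preceq B$); this is standard but should be said. Second, your derivation produces the deviation $\sqrt{v\tau}/n=\sqrt{\sigma_Z\tau/n}$ with $\sigma_Z=\|\frac1n\sum_i\E Z_i^2\|_\infty$ as defined in the proposition, whereas the displayed bound reads $\sigma_Z\sqrt{\tau/n}$; these agree only if $\sigma_Z$ is understood as the square root of that norm (i.e.\ $\sigma_Z^2=\|\frac1n\sum_i\E Z_i^2\|_\infty$). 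Your form is the dimensionally correct one, and the discrepancy is a notational slip in the statement rather than an error in your proof.
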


\section{Oracle inequalities}\label{oracle}


We can now state the main result for the procedure
(\ref{nuclearnormest}).
\begin{theorem}\label{theomain1}
Let $X_1,\ldots,X_n$ be i.i.d. vectors in $\R^p$ with covariance
matrix $\Sigma$. For any $p\geq 2,n\geq 1$, we have on the event
$\lambda \geq 2\|\tilde \Sigma_n - \Sigma\|_\infty$
\begin{align}
\left\|  \hat\Sigma^\lambda - \Sigma  \right\|_2^2 &\leq \inf_{S\in
\mathcal S_p}\left\lbrace \left\| S - \Sigma \right\|_2^2 +
\min\left\lbrace  2\lambda \|S\|_1  ,
\frac{(1+\sqrt{2})^2}{8} \lambda^2\mathrm{rank}(S) \right\rbrace\right \rbrace,
\end{align}
and
\begin{eqnarray}
    \left\| \hat{\Sigma}^\lambda - \Sigma \right\|_{\infty} &\leq&
    \lambda.
  \end{eqnarray}
\end{theorem}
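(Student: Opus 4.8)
The plan is to treat \eqref{nuclearnormest} as a standard nuclear-norm penalized least-squares (matrix Lasso) problem and run the now-classical oracle-inequality argument, exploiting the decomposability of the nuclear norm together with the trace-duality bound and the subdifferential characterization \eqref{subdiff}. The only slightly nonstandard feature is the constraint $S\in\mathcal S_p$, but since $\Sigma\in\mathcal S_p$ and $\hat\Sigma^\lambda\in\mathcal S_p$, all the matrices that appear in the analysis are symmetric, so this causes no real difficulty and in fact simplifies the use of \eqref{subdiff}.

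First I would write down the first-order optimality condition for the convex program \eqref{nuclearnormest}: there exists $\hat V\in\partial\|\hat\Sigma^\lambda\|_1$ such that for every $S\in\mathcal S_p$, $\langle 2(\hat\Sigma^\lambda-\tilde\Sigma_n)+\lambda\hat V,\, S-\hat\Sigma^\lambda\rangle\geq 0$. Rearranging and using $\tilde\Sigma_n=\Sigma+(\tilde\Sigma_n-\Sigma)$ gives, for any fixed competitor $S$,
$$
\|\hat\Sigma^\lambda-\Sigma\|_2^2 \leq \|S-\Sigma\|_2^2 - \|\hat\Sigma^\lambda-S\|_2^2 + 2\langle \tilde\Sigma_n-\Sigma,\,\hat\Sigma^\lambda-S\rangle + \lambda\langle \hat V, S-\hat\Sigma^\lambda\rangle.
$$
The cross term is controlled by trace duality: $|\langle\tilde\Sigma_n-\Sigma,\hat\Sigma^\lambda-S\rangle|\leq \|\tilde\Sigma_n-\Sigma\|_\infty\|\hat\Sigma^\lambda-S\|_1\leq \tfrac{\lambda}{2}\|\hat\Sigma^\lambda-S\|_1$ on the event $\lambda\geq 2\|\tilde\Sigma_n-\Sigma\|_\infty$. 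For the subgradient term, monotonicity of the subdifferential gives $\langle \hat V,S-\hat\Sigma^\lambda\rangle\leq \langle V_S,S-\hat\Sigma^\lambda\rangle$ for any $V_S\in\partial\|S\|_1$, and choosing $V_S$ via \eqref{subdiff} (the part supported on $L=\mathrm{supp}(S)$ plus an optimally-aligned $P_L^\perp W P_L^\perp$ term) lets one bound $\lambda\langle V_S,S-\hat\Sigma^\lambda\rangle \leq \lambda\|S\|_1-\lambda\|P_L^\perp(\hat\Sigma^\lambda-S)P_L^\perp\|_1$ plus terms involving $\|P_L(\hat\Sigma^\lambda-S)\|_1$. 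Combining these, the $\|\hat\Sigma^\lambda-S\|_1$ pieces either telescope against the penalty or get split via $\|\hat\Sigma^\lambda-S\|_1\leq \|P_L(\cdot)\|_1+\|P_L^\perp(\cdot)\|_1$, yielding the cone-type inequality. From here the two bounds follow by two branches: (i) crudely bounding $2\lambda\langle V_S,\hat\Sigma^\lambda-S\rangle\leq 2\lambda\|S\|_1$ (using $\|V_S\|_\infty\le1$) gives the $2\lambda\|S\|_1$ term directly; (ii) using $\|P_L(\hat\Sigma^\lambda-S)\|_1\leq\sqrt{2\,\mathrm{rank}(S)}\,\|\hat\Sigma^\lambda-S\|_2$ (rank of $P_L M P_L$ plus $P_L M P_L^\perp$ is at most $2r$) followed by the elementary inequality $ab\leq \tfrac12(a^2+b^2)$ to absorb $\|\hat\Sigma^\lambda-\Sigma\|_2^2$, and reconciling constants, produces the $\tfrac{(1+\sqrt2)^2}{8}\lambda^2\mathrm{rank}(S)$ term. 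Taking the infimum over $S\in\mathcal S_p$ finishes the first display.

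For the spectral-norm bound $\|\hat\Sigma^\lambda-\Sigma\|_\infty\leq\lambda$, I would argue directly from optimality rather than from the Frobenius bound. The optimality condition reads $2(\tilde\Sigma_n-\hat\Sigma^\lambda)\in\lambda\,\partial\|\hat\Sigma^\lambda\|_1$ (ignoring the constraint, which is inactive in the relevant sense because the unconstrained minimizer is already in $\mathcal S_p$ — this point should be checked), and every element of $\partial\|\hat\Sigma^\lambda\|_1$ has spectral norm at most $1$ by \eqref{subdiff}. Hence $\|\tilde\Sigma_n-\hat\Sigma^\lambda\|_\infty\leq \lambda/2$, and the triangle inequality together with $\|\tilde\Sigma_n-\Sigma\|_\infty\leq\lambda/2$ gives $\|\hat\Sigma^\lambda-\Sigma\|_\infty\leq\lambda$.

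I expect the main obstacle to be the bookkeeping of constants in the second Frobenius bound: getting exactly $(1+\sqrt2)^2/8$ requires being careful about where the factor $\sqrt2$ (from the rank-doubling of $P_L M P_L^\perp + P_L^\perp M P_L$) enters and how the splitting constant interacts with the $ab\le\tfrac12(a^2+b^2)$ step, rather than settling for a cruder universal constant. A secondary technical point worth a sentence is justifying that the positive-semidefiniteness constraint in \eqref{nuclearnormest} does not interfere with using the full subdifferential \eqref{subdiff}: since $S,\hat\Sigma^\lambda\in\mathcal S_p$ one can work throughout with symmetric matrices and the variational inequality form of optimality, so no projection onto $\mathcal S_p$ ever needs to be analyzed.
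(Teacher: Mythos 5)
Your proposal follows essentially the same route as the paper: the Frobenius bound via the variational inequality, subdifferential monotonicity, the $P_L$-decomposition with the $\sqrt{2\,\mathrm{rank}(S)}$ Cauchy--Schwarz step, and the $ab\le\tfrac12(a^2+b^2)$ absorption yielding $\tfrac{(1+\sqrt2)^2}{8}\lambda^2\mathrm{rank}(S)$; and the spectral bound via $\|\hat\Sigma^\lambda-\tilde\Sigma_n\|_\infty\le\lambda/2$ plus the triangle inequality. The only cosmetic difference is that the paper gets $\|\hat\Sigma^\lambda-\tilde\Sigma_n\|_\infty\le\lambda/2$ by exhibiting the explicit eigenvalue soft-thresholding form of the solution (which also settles the point you flag about the positive-semidefiniteness constraint being inactive), whereas you invoke the generic bound $\|\hat V\|_\infty\le1$ on nuclear-norm subgradients.
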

As we see in Theorem \ref{theomain1}, the regularization parameter
$\lambda$ should be chosen sufficiently large such that the
condition $\lambda \geq 2\|\tilde \Sigma_n - \Sigma\|_\infty$ holds
with probability close to $1$. The optimal choice of $\lambda$
depends on the unknown distribution of the observations. We consider
now the case of sub-gaussian random vector $X\in\R^p$.

\begin{assumption}[Sub-gaussian observations]\label{assumption1}
The random vector $X\in \R^p$ is sub-gaussian, that is $\|
X\|_{\psi_2}<\infty$. In addition, there exist a numerical constant $c_1>0$
such that
\begin{equation}\label{subexp1}
\E(\langle X, u \rangle)^2\geq c_1 \|\langle X, u
\rangle\|_{\psi_2}^2,\, \forall u\in \R^p.
\end{equation}
\end{assumption}
Note that Gaussian distributions satisfy Assumption
\ref{assumption1}. Under the above condition, we can study the
stochastic quantity $\|\tilde \Sigma_n - \Sigma\|_\infty$ and thus
properly tune the regularization parameter $\lambda$.

The intrinsic dimension of the matrix $\Sigma$ can be measured by
the effective rank
\begin{equation}\label{intrinsic}
\mathbf{r}(\Sigma):= \frac{\mathrm{tr}(\Sigma)}{\|\Sigma\|_\infty},
\end{equation}
see Section 5.4.3 in \cite{vershynin}. Note that we always have
$\mathbf{r}(\Sigma) \leq \mathrm{rank}(\Sigma)$. In addition, we can
possibly have $\mathbf{r}(\Sigma) \ll \mathrm{rank}(\Sigma)$ for
approximately low-rank matrices $\Sigma$, that is matrices $\Sigma$
with large rank but concentrated around a low-dimensional subspace. Consider for instance the covariance matrix $\Sigma$ with eigenvalues $\sigma_1 = 1$ and $\sigma_2 = \cdots = \sigma_p = 1/p$, then $\mathbf{r}(\Sigma) =\frac{2p-1}{p} \ll p = \mathrm{rank}(\Sigma)$ 

We have the following result, which requires no condition on the
covariance matrix $\Sigma$.
\begin{proposition}\label{lem1}
Let $X_1,\ldots,X_n\in\R^p$ be i.i.d. random vectors satisfying
Assumption \ref{assumption1}. Let $Y_1,\ldots,Y_n$ be defined in
(\ref{equationY}) with $\delta \in (0,1]$. Then, for any $t>0$, we
have with probability at least $1-e^{-t}$
\begin{equation}\label{prop1-bound-1}
\|\tilde\Sigma_n - \Sigma\|_\infty \leq
C\frac{\|\Sigma\|_\infty}{c_1} \max\left\{
\sqrt{\frac{\mathbf{r}(\Sigma)\left(t+ \log(2p)\right)}{\delta^2
n}},\frac{\mathbf{r}(\Sigma)\left(t+ \log(2p)\right)}{\delta^2
n}\left(c_1\delta + t+ \log n\right)\right\},
\end{equation}
and
\begin{equation}\label{prop1-bound-2}
 |\mathrm{tr}(\tilde\Sigma_n) - \mathrm{tr}(\Sigma)| \leq C
 \frac{\mathrm{tr}(\Sigma)}{c_1\delta}\max\left\{\sqrt{\frac{t}{n}},\frac{t}{n}\right\},
\end{equation}
where $C>0$ is an absolute constant.
\end{proposition}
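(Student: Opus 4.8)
The plan is to write $\tilde\Sigma_n - \Sigma$ as an average of centered i.i.d.\ random matrices and apply a truncated version of the matrix Bernstein inequality (Proposition \ref{prop:Bernstein_bounded}), together with the scalar Bernstein inequality (Proposition \ref{Bernstein}) for the trace bound \eqref{prop1-bound-2}. First I would introduce the $p\times p$ random matrices
$$
Z_i = (\delta^{-1}-\delta^{-2})\,\mathrm{diag}(Y_i\otimes Y_i) + \delta^{-2}\,Y_i\otimes Y_i - \Sigma,
$$
so that $\tilde\Sigma_n - \Sigma = n^{-1}\sum_{i=1}^n Z_i$ and, by the unbiasedness identity \eqref{Sigmarecons}, $\E Z_i = 0$. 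The $Z_i$ are symmetric but \emph{not} bounded, since $X$ is only sub-gaussian, so the direct application of Proposition \ref{prop:Bernstein_bounded} is not available; this truncation issue is the main obstacle and the technical heart of the proof.

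To handle it, I would fix a threshold of the form $M \asymp \|\Sigma\|_\infty\, c_1^{-1}(t+\log n)$ (the logarithmic factor is what ultimately produces the extra $t+\log n$ term in \eqref{prop1-bound-1}) and split each $X_i$ into a truncated part and a tail part according to whether $|X_i|_2$ — or more precisely the relevant quadratic form — exceeds $M$. Using the sub-gaussian assumption and the moment bound in property (1) after the $\psi_\alpha$ definition, one shows that the tail contribution $n^{-1}\sum_i (Z_i - \E Z_i\mathbf 1_{\text{trunc}})$, restricted to the event where some $|X_i|_2^2$ is large, has negligible probability (at most $e^{-t}$) for this choice of $M$; the expectation of the discarded part is likewise small. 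On the truncated event the matrices obey a deterministic bound $\|Z_i\|_\infty \le U$ with $U \asymp \delta^{-2}\|\Sigma\|_\infty\mathbf{r}(\Sigma)\,c_1^{-1}(t+\log n)$, using that for a rank-one matrix $Y_i\otimes Y_i$ the operator norm equals $|Y_i|_2^2 \le |X_i|_2^2$, that $\E|X_i|_2^2 = \mathrm{tr}(\Sigma) = \mathbf{r}(\Sigma)\|\Sigma\|_\infty$, and that passing to $\mathrm{diag}$ or scaling by $\delta^{-1},\delta^{-2}$ only changes constants and powers of $\delta$.

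Next I would bound the variance parameter $\sigma_Z = \big\|n^{-1}\sum_i \E Z_i^2\big\|_\infty$. Expanding $Z_i^2$, the dominant term is $\delta^{-2}\,\E\big[(Y_i\otimes Y_i)^2\big] = \delta^{-2}\,\E\big[|Y_i|_2^2\, Y_i\otimes Y_i\big]$; bounding $|Y_i|_2^2 \le |X_i|_2^2$ and using the sub-gaussian tail (so $\E|X_i|_2^2\,\langle X_i,u\rangle^2 \lesssim \mathrm{tr}(\Sigma)\,\|\langle X_i,u\rangle\|_{\psi_2}^2 \lesssim c_1^{-1}\mathrm{tr}(\Sigma)\,\Sigma$ in the order sense) gives $\sigma_Z \lesssim \delta^{-2} c_1^{-1}\|\Sigma\|_\infty^2\,\mathbf{r}(\Sigma)$, whose square root yields the leading term $\|\Sigma\|_\infty c_1^{-1}\sqrt{\mathbf{r}(\Sigma)(t+\log 2p)/(\delta^2 n)}$ in \eqref{prop1-bound-1}. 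Feeding $\sigma_Z$ and $U$ into Proposition \ref{prop:Bernstein_bounded}, applied to the truncated matrices, produces exactly the $\max$ of the two terms; combining with the negligible tail event via a union bound (and absorbing constants and the $e^{-t}$ probabilities) gives \eqref{prop1-bound-1}. Finally, \eqref{prop1-bound-2} is the scalar statement: $\mathrm{tr}(\tilde\Sigma_n)-\mathrm{tr}(\Sigma) = n^{-1}\sum_i\big(\delta^{-1}|Y_i|_2^2 - \mathrm{tr}(\Sigma)\big)$, since the diagonal and off-diagonal corrections combine so that $\delta^{-1}|Y_i|_2^2$ is an unbiased estimator of $\mathrm{tr}(\Sigma)$; each summand is centered sub-exponential with $\psi_1$-norm $\lesssim \delta^{-1}\|\,|X_i|_2^2\,\|_{\psi_1}\lesssim \delta^{-1}c_1^{-1}\mathrm{tr}(\Sigma)$ by property (2) after the $\psi_\alpha$ definition, and Proposition \ref{Bernstein} delivers the bound directly.
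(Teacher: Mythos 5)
Your overall architecture is the same as the paper's: truncate on the event $\max_i|Y_i|_2^2\le U$ with $U\asymp c_1^{-1}\mathrm{tr}(\Sigma)(t+\log n)$ (justified by scalar Bernstein for $|Y_i|_2^2$, which is $\psi_1$ with norm $\lesssim c_1^{-1}\mathrm{tr}(\Sigma)$), apply the bounded matrix Bernstein inequality conditionally on that event, union-bound, and get \eqref{prop1-bound-2} from Proposition \ref{Bernstein} applied to $\delta^{-1}|Y_i|_2^2$. The one structural difference is that the paper first peels off the diagonal, writing $\|\tilde\Sigma_n-\Sigma\|_\infty\le\delta^{-1}\|\mathrm{diag}(\Sigma_n^{(\delta)}-\Sigma^{(\delta)})\|_\infty+\delta^{-2}\|A_n^{(\delta)}-A^{(\delta)}\|_\infty$ with $A_n^{(\delta)}$ the off-diagonal part, handling the diagonal coordinatewise by scalar Bernstein and only the off-diagonal part by matrix Bernstein. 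You instead keep the full matrix $Z_i$ together, and that is where your sketch has a genuine gap.

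The gap is in the variance bound, which is precisely where the sharp $\delta$-dependence lives. You identify the dominant contribution to $\E Z_i^2$ as $\E[(Y\otimes Y)^2]=\E[|Y|_2^2\,Y\otimes Y]$ scaled by the square of $\delta^{-2}$, i.e.\ $\delta^{-4}$ (your sketch writes $\delta^{-2}$), and then bound $|Y|_2^2\le|X|_2^2$ and invoke sub-gaussianity. But the diagonal of $\E_\delta[(Y\otimes Y)^2]$ equals $\delta^2|X|_2^2(X^{(k)})^2+(\delta-\delta^2)(X^{(k)})^4$: the mask $\delta_{i,k}$ enters the $k$-th diagonal entry of $Y\otimes Y$ only to the first power, so the $(X^{(k)})^4$ term carries a single factor of $\delta$ and, after multiplying by $\delta^{-4}$, contributes $\delta^{-3}\E(X^{(k)})^4\asymp\delta^{-3}c_1^{-2}\Sigma_{kk}^2$, which can be as large as $\delta^{-3}c_1^{-2}\mathrm{tr}(\Sigma)\|\Sigma\|_\infty$ --- one power of $\delta$ worse than the $\delta^{-2}$ needed for \eqref{prop1-bound-1}. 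That spurious $\delta^{-3}$ term is cancelled only by the cross terms between $\delta^{-2}Y\otimes Y$ and the correction $(\delta^{-1}-\delta^{-2})\mathrm{diag}(Y\otimes Y)$ (equivalently: the total diagonal coefficient of $Z_i+\Sigma$ is $\delta^{-1}$, not $\delta^{-2}$), and your sketch never tracks these cross terms. The paper's split avoids the issue entirely: every entry of the off-diagonal matrix carries two distinct Bernoullis, so its second moment genuinely scales as $\delta^2$ (hence $\delta^{-2}$ after rescaling, via the explicit entrywise computation of $\E_\delta V^2$ in Lemma \ref{lem4}), while the diagonal, scaled only by $\delta^{-1}$, is controlled by scalar Bernstein with $\psi_1$-norms $\lesssim c_1^{-1}\Sigma_{jj}$ and $\max_j\Sigma_{jj}\le\sqrt{\mathrm{tr}(\Sigma)\|\Sigma\|_\infty}$, which also lands on the $\delta^{-2}n$ rate. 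To make your combined version work you must expand $\E\bigl(\delta^{-1}\mathrm{diag}(Y\otimes Y)+\delta^{-2}(Y\otimes Y-\mathrm{diag}(Y\otimes Y))\bigr)^2$ in full and exhibit the cancellation; the remaining steps of your proposal (truncation level, the value of $U$, the trace bound) are correct and match the paper.
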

\begin{enumerate}

\item The natural choice for $t$ is of the order of magnitude $\log
(2p)$. Then the conclusions of Proposition \ref{lem1} hold true with
probability at least $1-\frac{1}{2p}$. In addition, if the number of
measurements $n$ is sufficiently large
\begin{equation}\label{measurements}
n\geq c\frac{\mathbf{r}(\Sigma)}{\delta^2}\log^2((2p)\vee n),
\end{equation}
where $c>0$ is a sufficiently large numerical constant, then an
acceptable choice for the regularization parameter $\lambda$ is
\begin{equation}\label{lambdaoptimal}
\lambda = C \frac{\|\Sigma\|_\infty}{c_1}
\sqrt{\frac{\mathbf{r}(\Sigma)\log (2p)}{\delta^2 n}},
\end{equation}
where the absolute constant $C>0$ is sufficiently large.

\item As we claimed in the introduction, Proposition \ref{lem1} requires no condition on $\Sigma$ whatsoever. However, for the result to be of any practical interest, we need the bound in (\ref{prop1-bound-1}) to be small, which is the case if the condition (\ref{measurements}) is satisfied.
This condition is interesting since it shows that the number of
measurements sufficient to guarantee a precise enough estimation of
the spectrum of $\Sigma$ grows with the effective rank
$\mathbf{r}(\Sigma)$. In particular, when no observation is missing
($\delta =1$), if $\Sigma $ is approximately low-rank so that
$\mathbf{r}(\Sigma)\ll p$, then only
$n=\mathcal{O}\left(\mathbf{r}(\Sigma)\log^2(2p)\right)$
measurements are sufficient to estimate precisely the spectrum of
the $p\times p$ covariance matrix $\Sigma$.

  \item Note that if we assume that $\|Y\otimes Y\|_\infty = |Y|_2^2\leq
  U$ a.s. for some constant $U>0$, then we can eliminate the $(c_1\delta+t+\log n)$ factor in (\ref{prop1-bound-1}). Consequently, we can replace the condition (\ref{measurements}) on the number of measurements by the following less restrictive one
  $$
  n\geq c \frac{\mathbf{r}(\Sigma)}{\delta^2}\log(2p),
  $$
  for some absolute constant $c>0$ sufficiently large. When there is no missing observation ($\delta=1$), we obtain the standard condition on the number of measurements (see Remark 5.53 in \cite{vershynin}). When some observations are missing ($\delta<1$), we have the
additional quantity $\delta^2$ in the denominators of
(\ref{prop1-bound-1}) and (\ref{measurements}). The bound
(\ref{prop1-bound-1}) is degraded in the case $\delta<1$ since we
observe less entries per measurement. Consequently, as we can see it
in (\ref{measurements}), if we denote by $N(\epsilon)$ the number of
necessary measurements to estimate $\Sigma$ with a precision
$\epsilon$ when no observation is missing ($\delta =1$), then we
will need at least $\mathcal{O}\left(N(\epsilon)/\delta^2\right)$
measurements in order to estimate $\Sigma$ with the same precision
$\epsilon$ when some observations are missing ($\delta<1$). In
Theorem \ref{theomain2}, we prove in particular that the dependence
of the bound (\ref{prop1-bound-1}) on $\delta$ is sharp by
establishing a minimax lower bound.

\item In the full observations case ($\delta=1$) and for sub-gaussian distributions with low rank covariance matrix $\Sigma$,
 a simple modification of the $\epsilon$-net argument used in \cite{vershynin} to prove Theorem 5.39 yields an inequality similar to (\ref{prop1-bound-1}) 
with an upper bound of the order $\|\Sigma\|_{\infty}\sqrt{\frac{\mathrm{rank}(\Sigma)+t}{n}}$ without any logarithmic factor $\log 2p$. Note however that this bound is suboptimal when $\mathbf{r}(\Sigma)\log^2\left((2p)\vee n\right) \ll \mathrm{rank}(\Sigma)$ (cf the discussion below Assumption \ref{assumption1} on the intrinsic dimension of a matrix). In addition, in the missing observations framework $\delta<1$, the matrix $\Sigma^{(\delta)}$ can have full rank even if the matrix $\Sigma$ is low rank. Therefore the $\epsilon$-net argument will yield an upper bound of the order $\|\Sigma\|_\infty \sqrt{\frac{p+t}{\delta n}}$ which is much larger than the bound derived in (\ref{prop1-bound-1}).

\item Proposition \ref{lem1} and Equation (\ref{lambdaoptimal}) give some insight on the tuning of the
regularization parameter:
$$
\lambda = C
\frac{\sqrt{\mathrm{tr}(\Sigma)\|\Sigma\|_\infty}}{c_1\delta}
\sqrt{\frac{\log (2p)}{ n}},
$$
where $C>0$ is a sufficiently large absolute constant. We see that
this choice of $\lambda$ depends on $\mathrm{tr}(\Sigma)$ and
$\|\Sigma\|_\infty$ which are typically unknown. Therefore we
propose to use instead
\begin{equation}\label{lambdapractice}
\lambda = C
\frac{\sqrt{\mathrm{tr}(\tilde\Sigma_n)\|\tilde\Sigma_n\|_\infty}}{
\delta}\sqrt{\frac{\log 2p}{n}},
\end{equation}
where $C>0$ is a large enough constant. Note that the above choice
of $\lambda$ does not depend on the unknown quantities
$\|\Sigma\|_\infty$ or $\mathrm{tr}(\Sigma)$ and constitutes thus an
interesting choice in practice. We prove in the next lemma that
$2\|\tilde \Sigma_n - \Sigma\|_\infty\leq  \lambda$ with probability
at least $1- \frac{1}{2p}$.
\end{enumerate}

\begin{lemma}\label{lem-lambda-datadriven}
Let the assumptions of Proposition \ref{lem1} be satisfied. Assume
in addition that (\ref{measurements}) holds true. Take $\lambda$ as
in (\ref{lambdapractice}) with $C>0$ a large enough constant that
can depend only on $c_1$. Then, we have with probability at least
$1-\frac{1}{2p}$ that
$$2\|\tilde \Sigma_n -\Sigma\|_\infty \leq \lambda \leq C' \|\Sigma\|_\infty
\sqrt{\frac{\mathbf{r}(\Sigma)\log (2p)}{ \delta^2 n}},$$ where
$C'>0$ can depend only on $c_1$.
\end{lemma}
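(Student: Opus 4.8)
The plan is to verify the two claimed inequalities, namely $2\|\tilde\Sigma_n-\Sigma\|_\infty\le\lambda$ and $\lambda\le C'\|\Sigma\|_\infty\sqrt{\mathbf r(\Sigma)\log(2p)/(\delta^2 n)}$, by showing that the data-driven quantity $\sqrt{\mathrm{tr}(\tilde\Sigma_n)\|\tilde\Sigma_n\|_\infty}$ appearing in (\ref{lambdapractice}) is, with probability at least $1-1/(2p)$, comparable to the deterministic quantity $\sqrt{\mathrm{tr}(\Sigma)\|\Sigma\|_\infty}$ that appears in the ideal choice (\ref{lambdaoptimal}). The key observation is that $\mathrm{tr}(\Sigma)\|\Sigma\|_\infty=\mathbf r(\Sigma)\|\Sigma\|_\infty^2$, so (\ref{lambdapractice}) is exactly (\ref{lambdaoptimal}) with $\|\Sigma\|_\infty$ and $\mathbf r(\Sigma)$ replaced by their empirical surrogates.

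First I would apply Proposition \ref{lem1} with $t=\log(2p)$: on an event $\mathcal E$ of probability at least $1-1/(2p)$ both (\ref{prop1-bound-1}) and (\ref{prop1-bound-2}) hold. From (\ref{prop1-bound-1}) together with the sample-size condition (\ref{measurements}) one deduces, exactly as in the first remark after Proposition \ref{lem1}, that on $\mathcal E$ we have $\|\tilde\Sigma_n-\Sigma\|_\infty\le C_1\|\Sigma\|_\infty\sqrt{\mathbf r(\Sigma)\log(2p)/(\delta^2 n)}\le \tfrac12\|\Sigma\|_\infty$ for $c$ in (\ref{measurements}) large enough. By Weyl's inequality $\big|\|\tilde\Sigma_n\|_\infty-\|\Sigma\|_\infty\big|\le\|\tilde\Sigma_n-\Sigma\|_\infty$, hence on $\mathcal E$,
\[
\tfrac12\|\Sigma\|_\infty\ \le\ \|\tilde\Sigma_n\|_\infty\ \le\ \tfrac32\|\Sigma\|_\infty .
\]
Similarly, from (\ref{prop1-bound-2}) and (\ref{measurements}) (which forces $n/\log^2(2p)$ to be large), $|\mathrm{tr}(\tilde\Sigma_n)-\mathrm{tr}(\Sigma)|\le\tfrac12\mathrm{tr}(\Sigma)$ on $\mathcal E$, so that $\tfrac12\mathrm{tr}(\Sigma)\le\mathrm{tr}(\tilde\Sigma_n)\le\tfrac32\mathrm{tr}(\Sigma)$. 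Multiplying these two two-sided bounds gives
\[
\tfrac14\,\mathrm{tr}(\Sigma)\|\Sigma\|_\infty\ \le\ \mathrm{tr}(\tilde\Sigma_n)\|\tilde\Sigma_n\|_\infty\ \le\ \tfrac94\,\mathrm{tr}(\Sigma)\|\Sigma\|_\infty
\]
on $\mathcal E$.

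With this sandwich in hand the two conclusions follow. For the upper bound on $\lambda$: plugging the right-hand inequality into (\ref{lambdapractice}) yields $\lambda\le \tfrac32 C\,\sqrt{\mathrm{tr}(\Sigma)\|\Sigma\|_\infty}\,\sqrt{\log(2p)/n}/\delta=\tfrac32 C\,\|\Sigma\|_\infty\sqrt{\mathbf r(\Sigma)\log(2p)/(\delta^2 n)}$, which is the asserted bound with $C'=\tfrac32 C$ (depending only on $c_1$ through the constant hidden in $C$). For the lower bound $2\|\tilde\Sigma_n-\Sigma\|_\infty\le\lambda$: using the left-hand inequality of the sandwich, $\lambda\ge \tfrac12 C\,\|\Sigma\|_\infty\sqrt{\mathbf r(\Sigma)\log(2p)/(\delta^2 n)}$ on $\mathcal E$, while we already showed $\|\tilde\Sigma_n-\Sigma\|_\infty\le C_1\|\Sigma\|_\infty\sqrt{\mathbf r(\Sigma)\log(2p)/(\delta^2 n)}$ on $\mathcal E$; choosing the absolute constant $C$ in (\ref{lambdapractice}) so that $\tfrac12 C\ge 2C_1$ gives $2\|\tilde\Sigma_n-\Sigma\|_\infty\le\lambda$. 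All estimates hold simultaneously on the single event $\mathcal E$, so the whole statement holds with probability at least $1-1/(2p)$.

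The only mildly delicate point — and the one I would state carefully rather than wave at — is the bookkeeping of constants: one must first fix $C_1$ (from Proposition \ref{lem1}), then fix $c$ in the sample-size condition (\ref{measurements}) large enough to make the deviations of $\|\tilde\Sigma_n\|_\infty$ and $\mathrm{tr}(\tilde\Sigma_n)$ at most half of their targets, and only then choose $C$ in (\ref{lambdapractice}) large relative to $C_1$; circularity is avoided because $c$ does not depend on $C$. Everything else is Weyl's inequality plus the elementary algebra above.
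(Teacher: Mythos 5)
Your proposal is correct and follows essentially the same route as the paper: apply Proposition \ref{lem1} with $t=\log(2p)$, use condition (\ref{measurements}) to get the two-sided bounds $\tfrac12\|\Sigma\|_\infty\le\|\tilde\Sigma_n\|_\infty\le\tfrac32\|\Sigma\|_\infty$ and $\tfrac12\mathrm{tr}(\Sigma)\le\mathrm{tr}(\tilde\Sigma_n)\le\tfrac32\mathrm{tr}(\Sigma)$ on a single event, and then compare $\lambda$ to the deterministic bound on $\|\tilde\Sigma_n-\Sigma\|_\infty$. Your explicit remarks on Weyl's inequality and the order in which the constants are fixed only make precise what the paper leaves implicit.
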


We obtain the following corollary of Theorem \ref{theomain1}.
\begin{corollary}\label{cor22}
Let Assumption \ref{assumption1} be satisfied. Assume that
(\ref{measurements}) is satisfied. Consider the estimator
(\ref{nuclearnormest}) with the regularization parameter $\lambda$
satisfying (\ref{lambdapractice}). Then we have, with probability at
least $1-\frac{1}{2p}$ that
\begin{equation}
\|\hat\Sigma^\lambda  - \Sigma\|_2^2 \leq  \inf_{S\in\mathcal
S_p}\left\lbrace \|\Sigma - S\|_2^2 +C_1 \|\Sigma\|_\infty^2
\frac{\mathbf{r}(\Sigma)\log 2p}{\delta^2
n}\mathrm{rank}(S)\right\rbrace,
\end{equation}
and
\begin{equation}
\|\hat\Sigma^\lambda  - \Sigma\|_\infty \leq C_2\|\Sigma\|_\infty
\sqrt{\frac{\mathbf{r}(\Sigma)\log 2p}{\delta^2 n}},
\end{equation}
where $C_1,C_2>0$ can depend only on $c_1$.
\end{corollary}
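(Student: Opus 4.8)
The plan is to derive the corollary by combining Theorem~\ref{theomain1}, Proposition~\ref{lem1} and Lemma~\ref{lem-lambda-datadriven} on a single favorable event; since the analytic content is already contained in those statements, what remains is essentially bookkeeping.

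First, I would invoke Lemma~\ref{lem-lambda-datadriven}. Since Assumption~\ref{assumption1} holds, condition~(\ref{measurements}) is satisfied, and $\lambda$ is chosen as in~(\ref{lambdapractice}) with a sufficiently large constant (depending only on $c_1$), there is an event $\mathcal{E}$ of probability at least $1-\frac{1}{2p}$ on which
$$
2\|\tilde\Sigma_n - \Sigma\|_\infty \;\leq\; \lambda \;\leq\; C'\,\|\Sigma\|_\infty\sqrt{\frac{\mathbf{r}(\Sigma)\log(2p)}{\delta^2 n}},
$$
where $C'>0$ depends only on $c_1$. The left-hand inequality is exactly the event on which Theorem~\ref{theomain1} is valid, so the whole argument can be carried out on $\mathcal{E}$.

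Second, on $\mathcal{E}$ I would apply Theorem~\ref{theomain1}. Its spectral-norm conclusion gives $\|\hat\Sigma^\lambda-\Sigma\|_\infty\leq\lambda$, and the upper bound on $\lambda$ supplied by $\mathcal{E}$ yields the second assertion with $C_2=C'$. For the Frobenius conclusion, I would discard the $2\lambda\|S\|_1$ term inside the minimum of Theorem~\ref{theomain1} and retain only the $\mathrm{rank}(S)$ branch, so that
$$
\|\hat\Sigma^\lambda-\Sigma\|_2^2 \;\leq\; \inf_{S\in\mathcal{S}_p}\left\{\|S-\Sigma\|_2^2+\frac{(1+\sqrt{2})^2}{8}\,\lambda^2\,\mathrm{rank}(S)\right\};
$$
substituting $\lambda^2\leq (C')^2\|\Sigma\|_\infty^2\,\mathbf{r}(\Sigma)\log(2p)/(\delta^2 n)$ from $\mathcal{E}$ and setting $C_1=\frac{(1+\sqrt{2})^2}{8}(C')^2$ gives the first assertion. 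Both $C_1$ and $C_2$ depend only on $c_1$ because $C'$ does.

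I do not expect a genuine obstacle at this stage: the only points requiring care are that all three statements be applied on the common event $\mathcal{E}$, and that the data-driven choice~(\ref{lambdapractice}) be \emph{simultaneously} large enough to dominate $2\|\tilde\Sigma_n-\Sigma\|_\infty$ and small enough to be controlled by the oracle scale $\|\Sigma\|_\infty\sqrt{\mathbf{r}(\Sigma)\log(2p)/(\delta^2 n)}$ --- both of which are precisely what Lemma~\ref{lem-lambda-datadriven} (which itself rests on the deviation bounds~(\ref{prop1-bound-1})--(\ref{prop1-bound-2}) of Proposition~\ref{lem1}) delivers. The substantive work lies upstream, in the oracle inequality of Theorem~\ref{theomain1} and in the two-sided control of $\lambda$ in Lemma~\ref{lem-lambda-datadriven}, rather than in the present deduction.
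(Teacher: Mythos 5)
Your proposal is correct and follows exactly the paper's route: the paper itself states that the corollary is immediate by combining Theorem \ref{theomain1} with Proposition \ref{lem1} and Lemma \ref{lem-lambda-datadriven}, which is precisely the bookkeeping you carry out. Working on the single event from Lemma \ref{lem-lambda-datadriven}, taking the $\mathrm{rank}(S)$ branch of the minimum, and substituting the two-sided control of $\lambda$ is all that is needed.
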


The proof of this corollary is immediate by combining Theorem
\ref{theomain1} with Proposition \ref{lem1} and Lemma
\ref{lem-lambda-datadriven}.

\section{Lower bounds}\label{lower}

For any integer $1\leq r \leq p$, define
$$\mathcal C_r =\left\lbrace
S \in \mathcal S_p\,:\, \mathbf{r}(S)\leq r \right\rbrace.$$ We also
introduce $\mathcal P_r$ the class of probability distributions on
$\R^p$ with covariance matrix $\Sigma\in \mathcal C_r$.

We now establish a minimax lower bound that guarantees the rates we
obtained in Corollary \ref{cor22} are optimal up to a logarithmic
factor on the probability distribution class $\mathcal P_r$. In particular, the dependence of our rates on $\delta$,
$\|\Sigma\|_\infty$ and $\mathbf{r}(\Sigma)$ is sharp.

\begin{theorem}\label{theomain2}
Fix $\delta \in (0,1]$. Let $n,r\geq 1$ be integers such that $n\geq
\delta^{-2} r^2$. Let $X_1,\ldots,X_n$ be i.i.d. random vectors in
$\R^p$ with covariance matrix $\Sigma\in \mathcal C_r$
. We
observe $n$ i.i.d. random vectors $Y_1,\ldots,Y_n\in\R^p$ such that
$$
Y_{i}^{j} = \delta_{ij}X_i^{(j)},\; 1\leq i \leq n,\; 1\leq j \leq
p,
$$
where $(\delta_{ij})_{1\leq i\leq n,\,1\leq j \leq p}$ is an i.i.d.
sequence of Bernoulli $B(\delta)$ random variables independent of
$X_1,\ldots,X_n$.

Then, there exist absolute constants $\beta\in(0,1)$ and $c>0$ such
that
\begin{equation}\label{eq:lower1}
\inf_{\hat{\Sigma}}
\sup_{\substack{\P_{\Sigma}\in\,{\cal P}_r
}}
\P_{\Sigma}\bigg(\|\hat{\Sigma}-\Sigma\|^2_{2}>
c\|\Sigma\|_\infty^2\frac{\mathbf{r}(\Sigma)}{\delta^2
n}\mathrm{rank}(\Sigma) \bigg)\ \geq\ \beta,
\end{equation}
and
\begin{equation}\label{eq:lower2}
\inf_{\hat{\Sigma}}
\sup_{\substack{\P_{\Sigma}\in\,{\cal P}_r
}}
\P_{\Sigma}\bigg(\|\hat{\Sigma}-\Sigma\|_{\infty}> c\|\Sigma\|_\infty
\sqrt{\frac{\mathbf{r}(\Sigma)}{\delta^2 n}} \bigg)\ \geq\ \beta,
\end{equation}
where $\inf_{\hat{\Sigma}}$ denotes the infimum over all possible
estimators $\hat{\Sigma}$ of $\Sigma$ based on $Y_1,\ldots,Y_n$.
\end{theorem}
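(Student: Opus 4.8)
The plan is to establish the two lower bounds \eqref{eq:lower1} and \eqref{eq:lower2} by a standard information-theoretic argument based on a carefully chosen finite family of covariance matrices in $\mathcal{C}_r$, combined with a lower bound on the testing error (Fano's lemma, or for the spectral-norm bound possibly a simpler two-point Le Cam argument). First I would construct a suitable packing set. For the Frobenius-norm bound I would use a Varshamov--Gilbert type combinatorial lemma to produce a large family $\{\Sigma_\omega : \omega \in \Omega\}$ of matrices of the form $\Sigma_\omega = \|\Sigma\|_\infty\bigl(I_{\text{block}} + \gamma \, B_\omega\bigr)$ where $B_\omega$ is supported on an $r\times r$ (or $\mathbf{r}(\Sigma)\times\text{rank})$ block, has entries in $\{0,\pm 1\}$ indexed by a binary string $\omega$, and $\gamma>0$ is a small scaling parameter to be tuned. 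The matrices must be genuinely positive semidefinite and lie in $\mathcal{C}_r$ (so the effective rank constraint must be checked), pairwise well-separated in Frobenius norm — $\|\Sigma_\omega - \Sigma_{\omega'}\|_2^2 \gtrsim \|\Sigma\|_\infty^2 \gamma^2 \cdot (\text{number of differing coordinates})$, which via Varshamov--Gilbert is of order $\|\Sigma\|_\infty^2 \gamma^2 r \cdot \mathrm{rank}$ — while simultaneously having controlled mutual Kullback--Leibler divergence.

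The key technical step, and the one specific to this paper, is computing (or upper-bounding) the KL divergence between the distributions of $(Y_1,\ldots,Y_n)$ under $\Sigma_\omega$ and under a reference $\Sigma_0$, accounting for the masking. I would take $X$ Gaussian $\mathcal{N}(0,\Sigma_\omega)$ so that, conditionally on the Bernoulli masks $(\delta_{ij})$, the vector $Y_i$ is Gaussian with covariance equal to $\Sigma_\omega$ restricted (with zeros) to the observed coordinates. The per-observation KL divergence is then $\mathbb{E}_\delta\bigl[\mathrm{KL}(\mathcal{N}(0,\Sigma_{\omega,S}) \,\|\, \mathcal{N}(0,\Sigma_{0,S}))\bigr]$ where $S$ is the random observed support; for a perturbation $\Sigma_\omega = \Sigma_0 + \|\Sigma\|_\infty\gamma B_\omega$ this is, to leading order in $\gamma$, of order $\gamma^2$ times $\mathbb{E}_\delta\|\Sigma_{0,S}^{-1/2} (B_\omega)_S \Sigma_{0,S}^{-1/2}\|_2^2$, and the masking introduces exactly a factor $\delta^2$ on the off-diagonal entries of $B_\omega$ (since an off-diagonal entry survives only when both its row and column indices are observed). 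This is where the $\delta^2$ in the rate comes from. Summing over $n$ observations gives total divergence $\lesssim n\gamma^2\delta^2 \cdot(\text{number of nonzero entries of }B_\omega) \asymp n\gamma^2\delta^2 r\cdot\mathrm{rank}$. Choosing $\gamma^2 \asymp \mathbf{r}(\Sigma)/(\delta^2 n)$ — equivalently $\gamma^2 \asymp r/(\delta^2 n)$ for this class — makes the total divergence a small constant multiple of $\log|\Omega| \asymp r\cdot\mathrm{rank}$, so Fano's inequality yields a constant lower bound $\beta$ on the minimax probability of error, and the separation translates into $\|\Sigma_\omega - \Sigma_{\omega'}\|_2^2 \gtrsim \|\Sigma\|_\infty^2 (r/(\delta^2 n))\,\mathrm{rank} \asymp \|\Sigma\|_\infty^2 \mathbf{r}(\Sigma)\,\mathrm{rank}(\Sigma)/(\delta^2 n)$, which is \eqref{eq:lower1}. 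The hypothesis $n\geq\delta^{-2}r^2$ is what guarantees $\gamma$ is small enough that the perturbed matrices stay positive definite and within $\mathcal{C}_r$, and that the leading-order KL expansion is valid.

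For the spectral-norm bound \eqref{eq:lower2} I would use a smaller packing — it suffices to separate in operator norm, and a rank-one or low-rank perturbation family $\Sigma_\omega = \|\Sigma\|_\infty(I_{\text{block}} + \gamma\, v_\omega v_\omega^\top)$ with $\{v_\omega\}$ a packing of the sphere in the $r$-dimensional block (or even a two-point Le Cam argument) gives $\|\Sigma_\omega-\Sigma_{\omega'}\|_\infty \gtrsim \|\Sigma\|_\infty\gamma$ while the KL budget now only needs $\log|\Omega|\asymp r$, permitting the larger choice $\gamma \asymp \sqrt{\mathbf{r}(\Sigma)/(\delta^2 n)}$; the same masking computation produces the $\delta^2$ factor. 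I expect the main obstacle to be the bookkeeping in the KL computation under random masking — in particular controlling $\mathbb{E}_\delta\|\Sigma_{0,S}^{-1/2}(B_\omega)_S\Sigma_{0,S}^{-1/2}\|_2^2$ uniformly over $\omega$ and verifying that lower-order terms in the $\gamma$-expansion are negligible — together with checking that every $\Sigma_\omega$ genuinely satisfies $\mathbf{r}(\Sigma_\omega)\leq r$, which constrains the block structure and the magnitude of $\gamma$ and is precisely what forces the condition $n\geq\delta^{-2}r^2$.
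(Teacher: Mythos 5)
Your construction for \eqref{eq:lower1} is essentially the paper's: the paper takes $\Sigma_\epsilon = I_r + \gamma\sum_{k<l}\epsilon_{k,l}(E_{k,l}+E_{l,k})$ padded with zeros, extracts a Varshamov--Gilbert subset of cardinality $2^{r(r-1)/16}+1$, conditions on the masks so that $Y_i\mid\delta_i\sim N(0,\Sigma^{(\delta_i)})$, bounds the conditional Kullback--Leibler divergence by $\gamma^2(d_i^2-d_i)$ with $d_i\sim\mathrm{Bin}(r,\delta)$, and averages over the masks to get the factor $\delta^2 r(r-1)$ --- exactly your ``an off-diagonal entry survives only when both indices are observed'' mechanism. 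One slip worth fixing: your tuning $\gamma^2\asymp r/(\delta^2 n)$ is inconsistent with your own accounting. The total divergence is $n\gamma^2\delta^2 r(r-1)$ while $\log|\Omega|\asymp r^2$, so you need $\gamma^2\asymp 1/(\delta^2 n)$ (the paper sets $\gamma=a/\sqrt{\delta^2 n}$); the separation $\gamma^2 r^2\asymp r^2/(\delta^2 n)\asymp \mathbf{r}(\Sigma)\,\mathrm{rank}(\Sigma)/(\delta^2 n)$ then comes out as you state, but with $\gamma^2= r/(\delta^2 n)$ the KL budget is exceeded by a factor of $r$. (The same choice of $\gamma$ is what makes $\gamma r\leq a$ under $n\geq\delta^{-2}r^2$, guaranteeing positive semidefiniteness and $\mathbf{r}(\Sigma_\epsilon)\asymp r$.) For \eqref{eq:lower2} you genuinely diverge from the paper: it builds no new packing, but reuses the same family and observes that $\mathrm{rank}(\Sigma_\epsilon-\Sigma_{\epsilon'})\leq r$ forces $\|\Sigma_\epsilon-\Sigma_{\epsilon'}\|_\infty^2\geq\|\Sigma_\epsilon-\Sigma_{\epsilon'}\|_2^2/r\gtrsim r/(\delta^2 n)$, so the spectral bound is obtained for free from the Frobenius separation. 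Your rank-one sphere-packing route can be made to work, but it costs a second KL computation and hides a trap: for a localized direction such as $v=e_1$ the masked perturbation $v_Jv_J^{\top}$ satisfies $\E_\delta\|P_J v\|_2^4=\delta$ rather than $\delta^2$, so you would have to restrict to delocalized $v_\omega$ to recover the correct $\delta^2$ scaling; the paper's rank inequality shortcut avoids this issue entirely.
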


\section{Proofs}\label{proof}

\subsection{Proof of Theorem \ref{theomain1}}
The proof of the first inequality adapts to covariance matrix
estimation the arguments used in the trace regression problem to
prove Theorems 1 and 11 in \cite{KLT}.
\begin{proof}
By definition of $\hat \Sigma^\lambda$, we have for any $S\in
\mathcal S_p$
\begin{align*}
\|\hat\Sigma^\lambda - \Sigma\|_2^2 \leq \|S - \Sigma\|_2^2 +
\lambda \|S\|_1 + 2\langle \Sigma - \tilde\Sigma_n,S-\hat\Sigma^\lambda
\rangle -\lambda \|\hat \Sigma^\lambda\|_1.
\end{align*}
If $\lambda \geq 2 \|\tilde\Sigma_n - \Sigma\|_\infty$, we deduce from the
previous display that
\begin{align*}
\|\hat\Sigma^\lambda - \Sigma\|_2^2 \leq \|S - \Sigma\|_2^2 +2
\lambda \|S\|_1,\quad \forall S\in \mathcal S_p.
\end{align*}
Next, a necessary and sufficient condition of minimum for problem
(\ref{nuclearnormest}) implies that there exists $\hat V \in
\partial \|
\hat \Sigma^\lambda\|_1$ such that for all $S\in \mathcal S_p$
\begin{equation}\label{KKT}
-2\langle \tilde\Sigma_n - \hat\Sigma^\lambda , \hat\Sigma^\lambda - S
\rangle + \lambda \langle \hat V, \hat\Sigma^\lambda - S  \rangle
\leq 0.
\end{equation}
For any $S\in \mathcal S_p$ of rank $r$ with spectral representation
$S = \sum_{j=1}^r \sigma_ju_j\otimes u_j$ and support $L$, It
follows from (\ref{KKT}) that
\begin{align}\label{Th1-interm1}
2\langle \hat\Sigma^\lambda -\Sigma, \hat\Sigma^\lambda - S \rangle
+ \lambda \langle \hat V-V, \hat\Sigma^\lambda - S  \rangle \leq
-\lambda \langle V,\hat\Sigma^\lambda -  S \rangle + 2\langle
\tilde\Sigma_n - \Sigma, \hat\Sigma^\lambda -  S \rangle,
\end{align}
for an arbitrary $V\in \partial \|S\|_1$. Note that $\langle \hat
V-V, \hat\Sigma^\lambda - S \rangle \geq 0$ by monotonicity of
subdifferentials of convex functions and that the following
representation holds
$$
V = \sum_{j=1}^r u_j \otimes u_j + P_{L}^{\perp}WP_{L}^{\perp},
$$
where $W$ is an arbitrary matrix with $\|W\|_\infty\leq 1$. In particular,
there exists $W$ with $\|W\|_\infty\leq 1$ such that
$$
\langle P_{L}^{\perp}WP_{L}^{\perp} , \hat\Sigma^\lambda - S \rangle
= \| P_{L}^{\perp}\hat\Sigma^\lambda P_{L}^{\perp} \|_1.
$$
For this choice of $W$, we get from (\ref{Th1-interm1}) that
\begin{align}\label{Th1-interm2}
\|\hat\Sigma^\lambda - \Sigma\|_2^2  +  &\|\hat\Sigma^\lambda -
S\|_2^2 +\lambda \| P_{L}^{\perp}\hat\Sigma^\lambda P_{L}^{\perp}
\|_1 \leq  \|S - \Sigma \|_2^2\notag \\&\hspace{2cm}+\lambda
\|P_{L}(\hat\Sigma^\lambda - S)P_{L}\|_1+ 2\langle \tilde\Sigma_n -
\Sigma, \hat\Sigma^\lambda - S \rangle,
\end{align}
where we have used the following facts
\begin{align*}
2\langle \hat\Sigma^\lambda -\Sigma, \hat\Sigma^\lambda - S
\rangle=\|\hat\Sigma^\lambda - \Sigma\|_2^2  +  \|\hat\Sigma^\lambda
- S\|_2^2-\|S - \Sigma \|_2^2,
\end{align*}
and
$$
\left\|\sum_{j=1}^r u_j\otimes u_j\right\|_\infty = 1,\quad
\left\langle \sum_{j=1}^r u_j\otimes u_j,\hat\Sigma^\lambda -  S
\right\rangle = \left\langle \sum_{j=1}^r u_j\otimes
u_j,P_{L}(\hat\Sigma^\lambda - S )P_L\right\rangle.
$$
For any $A\in \R^{p\times p}$ define $\mathcal P_L(A) = A -
P_{L}^{\perp} A P_{L}^{\perp}$. Set $\Delta_1 = \tilde\Sigma_n - \Sigma$. We have
$$
\langle \Delta_1, \hat\Sigma^\lambda - S \rangle = \langle \Delta_1,
\mathcal P_{L}(\hat\Sigma^\lambda - S) \rangle + \langle \Delta_1,
P_{L}^{\perp}(\hat\Sigma^\lambda - S)P_{L}^{\perp} \rangle.
$$
Using Cauchy-Schwarz's inequality and trace duality, we get
\begin{align*}
|\langle \Delta_1, \mathcal P_{L}(\hat\Sigma^\lambda - S) \rangle|
&\leq \sqrt{2\mathrm{rank}(S)}\|\Delta_1\|_\infty\|\hat\Sigma^\lambda - S\|_2,\\
\|P_{L}(\hat\Sigma^\lambda - S)P_{L}\|_1 &\leq
\sqrt{\mathrm{rank}(S)}\|\hat\Sigma^\lambda - S\|_2,\\
|\langle \Delta_1, P_{L}^{\perp}(\hat\Sigma^\lambda -
S)P_{L}^{\perp} \rangle| &\leq
\|\Delta_1\|_\infty\|P_{L}^{\perp}\hat\Sigma^\lambda P_{L}^{\perp}\|_1.
\end{align*}
The above display combined with (\ref{Th1-interm2}) give
\begin{align*}
\|\hat\Sigma^\lambda - \Sigma\|_2^2  +  &\|\hat\Sigma^\lambda -
S\|_2^2 +(\lambda - 2\|\Delta_1\|_\infty) \|
P_{L}^{\perp}\hat\Sigma^\lambda P_{L}^{\perp} \|_1 \leq  \|S -
\Sigma \|_2^2\notag \\&\hspace{4cm}+ (\sqrt{2}\|\Delta_1\|_\infty+\lambda)
\sqrt{r}\|\hat\Sigma^\lambda - S\|_2
\end{align*}
A decoupling argument gives
\begin{align*}
\|\hat\Sigma^\lambda - \Sigma\|_2^2  +  &\|\hat\Sigma^\lambda -
S\|_2^2 +(\lambda - 2\|\Delta_1\|_\infty) \|
P_{L}^{\perp}\hat\Sigma^\lambda P_{L}^{\perp} \|_1 \leq  \|S -
\Sigma \|_2^2\notag \\&\hspace{4cm}+
\left(\frac{1}{\sqrt{2}}\|\Delta_1\|_\infty+\frac{\lambda}{2}\right)^2
r + \|\hat\Sigma^\lambda - S\|_2^2.
\end{align*}
Finally, we get on the event $\lambda \geq 2 \|\Delta_1\|_\infty$
that
\begin{align*}
\|\hat\Sigma^\lambda - \Sigma\|_2^2  \leq  \|S - \Sigma \|_2^2
+\frac{(1+\sqrt{2})^2}{8}\lambda^2 \mathrm{rank}(S),\quad \forall
S\in \mathcal S_p.
\end{align*}

We now prove the spectral norm bound. Note first that the solution
of (\ref{nuclearnormest}) is given by
\begin{equation}\label{solution}
\hat\Sigma^{\lambda} = \sum_j \left(\sigma_j(\tilde\Sigma_n)-
\frac{\lambda}{2}\right)_+ u_j(\tilde\Sigma_n)\otimes
u_j(\tilde\Sigma_n),
\end{equation}
where $x_+ =\max\{0,x\}$ and $\tilde\Sigma_n$ admits the spectral
representation $$\tilde\Sigma_n = \sum_j
\sigma_j(\tilde\Sigma_n)u_j(\tilde\Sigma_n)\otimes u_j(\tilde\Sigma_n),$$
with positive eigenvalues $\sigma_j(\tilde\Sigma_n)\geq 0$ and orthonormal eigenvectors
$u_j(\tilde\Sigma_n)$. Indeed, the solution of (\ref{nuclearnormest}) is
unique since the functional $S\rightarrow F(S) = \|\tilde\Sigma_n - S\|_2^2 +
\lambda \|S\|_1$ is strictly convex. 
A sufficient condition of minimum is $\textbf{0} \in
\partial F (\hat \Sigma^\lambda)=-2( \tilde\Sigma_n
- \hat\Sigma^{\lambda})+ \lambda \hat V$ with $\hat V \in \partial
\|\hat \Sigma^\lambda\|_1$. We consider the following choice of
$\hat V =
\sum_{j:\sigma_j(\tilde\Sigma_n)\geq\lambda/2}u_j(\tilde\Sigma_n)\otimes
u_j(\tilde\Sigma_n) + W\in
\partial \|\hat\Sigma^\lambda\|_1$ with
$$
W = \sum_{j:\sigma_j(\tilde\Sigma_n)<\lambda/2}
\frac{2\sigma_j(\tilde\Sigma_n)}{\lambda} u_j(\tilde\Sigma_n)\otimes
u_j(\tilde\Sigma_n).
$$
It is easy to check that $\partial F(\hat\Sigma^\lambda)=-2(
\tilde\Sigma_n - \hat\Sigma^{\lambda})+ \lambda \hat V =\textbf{0}$.

Next, we have on the event $\lambda \geq 2 \|\Delta_1\|_\infty$
\begin{align*}
\|\hat\Sigma^\lambda - \Sigma\|_\infty &\leq \|\hat\Sigma^\lambda -
\tilde\Sigma_n\|_\infty + \| \Delta_1 \|_\infty\leq  \lambda.
\end{align*}
\end{proof}

\subsection{Proof of Proposition \ref{lem1}}

The delicate part of this proof is to obtain the sharp dependence on
$\delta$. As a consequence, the proof is significantly more
technical as compared to the case of full observations $\delta=1$.
To simplify the understanding of this proof, we decomposed it into
three lemmas that we prove below.

\begin{proof}
Define
$$
A_n^{(\delta)} =
\Sigma_n^{(\delta)}-\mathrm{diag}\left(\Sigma_n^{(\delta)}\right),\quad
A^{(\delta)} =
\Sigma^{(\delta)}-\mathrm{diag}\left(\Sigma^{(\delta)}\right).
$$

We have
\begin{align*}
  \left\|\tilde \Sigma_n - \Sigma\right\|_\infty &\leq
  \delta^{-1}\left\|\mathrm{diag}\left(\Sigma_n^{(\delta)} -
  \Sigma^{(\delta)}\right)\right\|_\infty + \delta^{-2}\left\|A_n^{(\delta)} -
  A^{(\delta)}\right\|_\infty.
\end{align*}
Now combining a simple union bound argument with Lemmas \ref{lem3},
\ref{lem4} and \ref{lem5}, we get with probability at least
$1-4e^{-t}$ that
\begin{align*}
|\mathrm{tr}(\Sigma_n^{(\delta)}) - \delta\mathrm{tr}(\Sigma)| \leq
Cc_1^{-1}\mathrm{tr}(\Sigma)\max\left\{\sqrt{\frac{t}{n}},\frac{t}{n}\right\},
\end{align*}
and
\begin{align*}
\|\tilde \Sigma_n - \Sigma\|_\infty \leq Cc_1^{-1}&\left[
\max_{1\leq j\leq p}(\Sigma_{jj})\max\left( \sqrt{\frac{t+\log
p}{\delta^{2}n}}, \frac{t+\log p}{\delta n}\right)\right.\\
&\hspace{-2cm}+\left. \max\left(
\sqrt{\mathrm{tr}(\Sigma)\|\Sigma\|_\infty \frac{t+\log
(2p)}{\delta^{2}n}}, \mathrm{tr}(\Sigma)\left(c_1\delta+ t+\log
n\right)\frac{t+\log (2p)}{\delta^{2}n} \right) \right],
\end{align*}
where $C>0$ is an absolute constant.

Noting finally that $\max_{1\leq j \leq p} (\Sigma_{jj})\leq
\sqrt{\mathrm{tr}(\Sigma)\|\Sigma\|_\infty} \wedge
\mathrm{tr}(\Sigma)$ and $\mathrm{tr}(\tilde \Sigma_n) = \delta^{-1}
\mathrm{tr}(\Sigma_n^{(\delta)})$, we can conclude, up to a
rescaling of the absolute constant $C>0$, that (\ref{prop1-bound-1})
and (\ref{prop1-bound-2}) hold true simultaneously with probability
at least $1-e^{-t}$.
\end{proof}

\begin{lemma}\label{lem3}
Under the assumptions of Proposition \ref{lem1}, we have with
probability at least $1-e^{-t}$ that
\begin{equation}
\left\|\mathrm{diag}\left(\Sigma_n^{(\delta)} -
  \Sigma^{(\delta)}\right)\right\|_\infty \leq C c_1^{-1}\max_{1\leq j \leq
  p}\left( \Sigma_{jj} \right) \max\left( \sqrt{\frac{t+\log p}{n}},
  \frac{t+\log p}{n}\right).
\end{equation}
where $C>0$ is an absolute constant.
\end{lemma}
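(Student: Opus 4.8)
The plan is to reduce the spectral-norm deviation of a diagonal matrix to a maximum over its diagonal entries, and then control each entry with the scalar Bernstein inequality (Proposition \ref{Bernstein}). First I observe that for a diagonal matrix $D = \mathrm{diag}(d_1,\dots,d_p)$ one has $\|D\|_\infty = \max_{1\le j\le p}|d_j|$, so
\[
\left\|\mathrm{diag}\left(\Sigma_n^{(\delta)} - \Sigma^{(\delta)}\right)\right\|_\infty = \max_{1\le j \le p}\left|\left(\Sigma_n^{(\delta)}\right)_{jj} - \left(\Sigma^{(\delta)}\right)_{jj}\right|.
\]
Since $\left(\Sigma_n^{(\delta)}\right)_{jj} = \frac1n\sum_{i=1}^n \left(Y_i^{(j)}\right)^2 = \frac1n\sum_{i=1}^n \delta_{i,j}\left(X_i^{(j)}\right)^2$ (using $\delta_{i,j}^2 = \delta_{i,j}$) and $\left(\Sigma^{(\delta)}\right)_{jj} = \delta\,\Sigma_{jj}$, the $j$-th diagonal discrepancy is the centered average of the i.i.d. variables $Z_{i,j} := \delta_{i,j}\left(X_i^{(j)}\right)^2 - \delta\Sigma_{jj}$.

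Next I bound the $\psi_1$-norm of these summands. Under Assumption \ref{assumption1}, $X_i^{(j)} = \langle X_i, e_j\rangle$ is sub-gaussian with $\|X_i^{(j)}\|_{\psi_2}^2 \le c_1^{-1}\E(X_i^{(j)})^2 = c_1^{-1}\Sigma_{jj}$, hence $\left(X_i^{(j)}\right)^2$ is sub-exponential with $\|(X_i^{(j)})^2\|_{\psi_1} \le 2\|X_i^{(j)}\|_{\psi_2}^2 \le 2c_1^{-1}\Sigma_{jj}$ by the recalled properties of $\psi_\alpha$-norms. Since $\delta_{i,j}$ is bounded by $1$ and independent of $X_i$, multiplying by $\delta_{i,j}$ does not increase the $\psi_1$-norm, and centering costs at most a universal constant factor; therefore $\|Z_{i,j}\|_{\psi_1} \le Cc_1^{-1}\Sigma_{jj}$ for an absolute constant $C$. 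Applying Proposition \ref{Bernstein} with $K = Cc_1^{-1}\Sigma_{jj}$ and level $t + \log p$ gives, for each fixed $j$,
\[
\left|\frac1n\sum_{i=1}^n Z_{i,j}\right| \le Cc_1^{-1}\Sigma_{jj}\left(\sqrt{\tfrac{t+\log p}{n}} \vee \tfrac{t+\log p}{n}\right)
\]
with probability at least $1 - e^{-(t+\log p)} = 1 - e^{-t}/p$. A union bound over $j=1,\dots,p$ then yields the stated inequality with probability at least $1 - e^{-t}$, after replacing $\Sigma_{jj}$ by $\max_{1\le j\le p}\Sigma_{jj}$ in the bound.

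I do not expect a genuine obstacle here: this lemma is the easy ingredient of Proposition \ref{lem1}, precisely because the diagonal terms involve only a $\delta$-scaling (not the $\delta^{-2}$ blow-up that makes the off-diagonal part $\|A_n^{(\delta)} - A^{(\delta)}\|_\infty$ delicate). The only points requiring a little care are tracking that $\|\delta_{i,j}(X_i^{(j)})^2 - \delta\Sigma_{jj}\|_{\psi_1}$ is controlled by $c_1^{-1}\Sigma_{jj}$ up to an absolute constant, and choosing the Bernstein level as $t + \log p$ so that the union bound over the $p$ diagonal entries leaves probability $1 - e^{-t}$; both are routine.
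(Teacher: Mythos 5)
Your proof is correct and follows essentially the same route as the paper: reduce the spectral norm of the diagonal difference to a maximum over diagonal entries, bound $\|(\delta_{i,j}X_i^{(j)})^2\|_{\psi_1}\le 2c_1^{-1}\Sigma_{jj}$ via Assumption \ref{assumption1}, and apply the scalar Bernstein inequality at level $t+\log p$ with a union bound over $j$. Your explicit remark that centering only costs an absolute constant in the $\psi_1$-norm is a minor point the paper glosses over, but the argument is the same.
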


\begin{proof}
We have
\begin{align*}
\left\| \mathrm{diag}\left(\Sigma_n^{(\delta)} -
\Sigma^{(\delta)}\right) \right\|_\infty &= \max_{1\leq j \leq p}
\left| \frac{1}{n}\sum_{i=1}^{n}\delta_{i,j}^2 \left( X_i^{(j)}
\right)^2 - \delta \Sigma_{jj} \right|.
\end{align*}
Next, since the random variables $\delta_{i,j}$ and $X_{i}^{(j)}$
are sub-gaussian for any $i,j$, we have
\begin{align*}
\left\| \left(\delta_{i,j}X_i^{(j)}\right)^2 \right\|_{\psi_1} \leq
2\left\|\delta_{i,j}X_i^{(j)}\right\|_{\psi_2}^2 \leq 2
\left\|X_i^{(j)}\right\|_{\psi_2}^2\leq 2c_1^{-1} \Sigma_{jj},
\end{align*}
where we have used Assumption 1 in the last inequality. We can apply
Bernstein's inequality (see Proposition \ref{Bernstein} in the
appendix below) to get for any $1\leq j \leq p$ with probability at
least $1-e^{-t'}$ that
\begin{align*}
\left| \frac{1}{n}\sum_{i=1}^{n}\delta_{i,j}^2 \left( X_i^{(j)}
\right)^2 - \delta \Sigma_{jj} \right| \leq Cc_1^{-1} \Sigma_{jj}
\max\left\{ \sqrt{\frac{t'}{n}}, \frac{t'}{n} \right\},
\end{align*}
where $C>0$ is an absolute constant. Next, taking $t'=t+\log p$
combined with a union bound argument we get the result.
\end{proof}

\begin{lemma}\label{lem4}
Under the assumptions of Proposition \ref{lem1}, we have with
probability at least $1-2e^{-t}$ that
\begin{align}
\|A_n^{(\delta)} - A^{(\delta)}\|_\infty &\leq C
\frac{\|\Sigma\|_\infty}{c_1} \max\left\{ \delta
\sqrt{\frac{\mathbf{r}(\Sigma)\left(t + \log (2p)\right)}{n}}\,,\right.\notag\\
\ &\hspace{2cm}\left. \frac{\mathbf{r}(\Sigma)\left(t + \log
(2p)\right)}{n}\left( c_1\delta + t+\log n\right) \right\},
\end{align}
where $C>0$ is a large enough absolute constant.
\end{lemma}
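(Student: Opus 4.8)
The plan is to apply the matrix Bernstein inequality (Proposition \ref{prop:Bernstein_bounded}) to a truncated version of the random matrices $Y_i \otimes Y_i - \mathrm{diag}(Y_i\otimes Y_i)$, then control the truncation error separately. Write $A_n^{(\delta)} - A^{(\delta)} = \frac1n\sum_{i=1}^n Z_i$ where $Z_i = \bigl(Y_i\otimes Y_i - \mathrm{diag}(Y_i\otimes Y_i)\bigr) - A^{(\delta)}$ are i.i.d., symmetric, centered random matrices. The obstacle is that these matrices are unbounded (only sub-exponential in norm), so Proposition \ref{prop:Bernstein_bounded} does not apply directly; this is where the technical work lies. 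First I would fix a threshold $U$ of the form $U \asymp c_1^{-1}\|\Sigma\|_\infty\,\mathbf{r}(\Sigma)(c_1\delta + t + \log n)$ and split $Z_i = Z_i\1_{\{\|Z_i\|_\infty\le U\}} + Z_i\1_{\{\|Z_i\|_\infty > U\}}$, writing the sum accordingly as a bounded part plus a remainder, after recentering each part.

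For the bounded part, I would apply Proposition \ref{prop:Bernstein_bounded} with this $U$, which requires computing $\sigma_Z = \bigl\|\frac1n\sum_i \E \bar Z_i^2\bigr\|_\infty$ where $\bar Z_i$ is the truncated-and-recentered matrix. The key estimate is $\|\E Z_i^2\|_\infty \lesssim c_1^{-2}\|\Sigma\|_\infty^2 \,\delta^2\,\mathbf{r}(\Sigma)$: one expands $Z_i^2$ as a matrix product, uses $\E(Y_i\otimes Y_i) = \Sigma^{(\delta)}$ with $\|\Sigma^{(\delta)}\|_\infty \lesssim \delta^2\|\Sigma\|_\infty$ (since $\delta^2 \le \delta$ and the diagonal contributes at most $\delta\max_j\Sigma_{jj}\le\delta\|\Sigma\|_\infty$), and bounds the fourth-moment term using the sub-gaussian assumption: $\E|Y_i|_2^4 \lesssim \delta^2 (\E|X_i|_2^2)^2 = \delta^2 \mathrm{tr}(\Sigma)^2$ up to the $c_1^{-1}$ factor, together with $\mathrm{tr}(\Sigma) = \mathbf{r}(\Sigma)\|\Sigma\|_\infty$. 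Truncation only decreases this, so $\sigma_Z^2 \lesssim c_1^{-2}\|\Sigma\|_\infty^2\delta^2\mathbf{r}(\Sigma)$; plugging into Proposition \ref{prop:Bernstein_bounded} yields, on an event of probability $\ge 1-e^{-t}$, a bound of the announced form with the max of the $\delta\sqrt{\mathbf{r}(\Sigma)(t+\log 2p)/n}$ term and the $\mathbf{r}(\Sigma)(t+\log 2p)U/(\delta^2 n\|\Sigma\|_\infty c_1^{-1})$-type term, which matches $\frac{\mathbf{r}(\Sigma)(t+\log 2p)}{n}(c_1\delta + t + \log n)$ after substituting $U$.

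For the truncation remainder, the point is that $U$ is chosen large enough (the $t + \log n$ factor is exactly what buys this) that $\Prob{\|Z_i\|_\infty > U \text{ for some } i \le n}$ is at most $e^{-t}$: since $\|Z_i\|_\infty \le |Y_i|_2^2 + \|A^{(\delta)}\|_\infty$ and $|Y_i|_2^2 = \langle Y_i, Y_i\rangle$ is a sum of squares of sub-gaussian variables hence sub-exponential with $\psi_1$-norm $\lesssim c_1^{-1}\mathrm{tr}(\Sigma) = c_1^{-1}\mathbf{r}(\Sigma)\|\Sigma\|_\infty$ — more carefully, controlling $\||Y_i|_2^2\|_{\psi_1}$ requires a bound on $\E|Y_i|_2^{2m}$, which follows from the sub-gaussian moment bounds component-wise — a single-term tail estimate $\Prob{|Y_i|_2^2 > U} \le \exp(-c U c_1 /(\mathbf{r}(\Sigma)\|\Sigma\|_\infty))$ combined with a union bound over $i=1,\dots,n$ gives probability $\le n\exp(-c' U c_1/(\mathbf{r}(\Sigma)\|\Sigma\|_\infty)) \le e^{-t}$ for our choice of $U$ (the $\log n$ absorbs the factor $n$ and the $t$ absorbs the $e^{-t}$). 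On the complement of this event all $Z_i$ are already bounded by $U$, so the remainder vanishes and the recentering correction is negligible. Combining the two events via a union bound gives the stated inequality with probability at least $1-2e^{-t}$. The main obstacle, as noted, is the unboundedness of the summands: all the delicate bookkeeping — in particular getting the sharp $\delta$-scaling in $\sigma_Z$ (the $\delta$, not $\sqrt\delta$, in front of $\sqrt{\mathbf{r}(\Sigma)/n}$) and the precise form of $U$ — is exactly the step the author flags as significantly more technical than the $\delta=1$ case.
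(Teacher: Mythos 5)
Your overall architecture matches the paper's: a variance bound $\sigma_Z^2\lesssim c_1^{-2}\delta^2\mathrm{tr}(\Sigma)\|\Sigma\|_\infty$, a high-probability uniform bound $U\asymp \mathrm{tr}(\Sigma)(\delta+c_1^{-1}(t+\log n))$ on $\max_i\|Z_i\|_\infty$ obtained from the sub-exponentiality of $|Y_i|_2^2$ plus a union bound, and an application of Proposition \ref{prop:Bernstein_bounded} to the bounded part (the paper conditions on $\bigcap_i\{|Y_i|_2^2\le U\}$ rather than truncating and recentering, a cosmetic difference). However, there is a genuine gap in the one step you correctly identify as the crux, namely the $\delta^2$ scaling of $\|\E Z^2\|_\infty$. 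Your route goes through the claim $\E|Y|_2^4\lesssim\delta^2(\E|X|_2^2)^2$, which is false: expanding $|Y|_2^4=\sum_{j,k}\delta_j\delta_k(X^{(j)})^2(X^{(k)})^2$, the diagonal terms have $\E\delta_j^2=\delta$, so $\E|Y|_2^4\lesssim c_1^{-2}\bigl(\delta\,\|\Sigma\|_\infty\mathrm{tr}(\Sigma)+\delta^2\mathrm{tr}(\Sigma)^2\bigr)$, and the first term dominates whenever $\delta<1/\mathbf{r}(\Sigma)$. More importantly, even a correct bound on $\E|Y|_2^4$ combined with a global Cauchy--Schwarz on $\E[|Y|_2^2\,Y\otimes Y]$ cannot deliver the factor $\delta^2$, because $\sup_{|u|_2=1}\E\langle Y,u\rangle^4$ itself contains a term of order $\delta$ (again from the diagonal $\E\delta_j^2=\delta$), so this route stalls at $\delta^{3/2}$.

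The mechanism that actually produces $\delta^2$ in the paper is structural and you do not reproduce it: because $Z=Y\otimes Y-\mathrm{diag}(Y\otimes Y)$ has its diagonal removed, one can write $Z=V-\delta\,\mathrm{diag}(X\otimes X)$ with $V$ chosen so that the \emph{entrywise} conditional expectation $\E_\delta V^2$ carries a factor $\delta^2$ on the diagonal and $\delta^3$ off the diagonal; only after this exact computation does Cauchy--Schwarz in $X$ (using Assumption \ref{assumption1} for $\E\langle X,u\rangle^4$, $\E(X^{(j)})^4$ and $\E|X|_2^4\lesssim c_1^{-2}\mathrm{tr}(\Sigma)^2$) give $\|\E V^2\|_\infty\lesssim c_1^{-2}\delta^2\mathrm{tr}(\Sigma)\|\Sigma\|_\infty$, and similarly for the $\delta\,\mathrm{diag}(X\otimes X)$ correction. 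Without some version of this entrywise averaging over the mask, your $\sigma_Z$ estimate is not justified, and the $\delta$ in front of $\sqrt{\mathbf{r}(\Sigma)(t+\log(2p))/n}$ --- the whole point of the lemma --- is exactly what is lost. A secondary, more minor omission: after truncation you must bound the bias $\|\E[Z_i\1_{\{\|Z_i\|_\infty>U\}}]\|_\infty$ (or, as in the paper, work conditionally), which you wave off as ``negligible'' without an estimate.
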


\begin{proof}
We have
$$
A_n^{(\delta)} - A^{(\delta)} = \frac{1}{n}\sum_{i=1}^n Z_i -
\mathbb E \left(Z_i \right),
$$
where
$$
Z_i = Y_i\otimes Y_i - \mathrm{diag}\left( Y_i\otimes Y_i \right)
,\; 1\leq i \leq n.
$$
Define $Y=(\delta_1X^{(1)},\ldots,\delta_pX^{(p)})^{\top}$ where
$\delta_1,\ldots,\delta_p$ are i.i.d. Bernoulli random variables
with parameter $\delta$ independent from $X$ and $Z=Y\otimes Y-
\mathrm{diag}\left( Y\otimes Y \right).$
We want to apply the noncommutative Bernstein inequality for
matrices. To this end, we need to study the quantities $\|\mathbb E
(Z-\mathbb E Z)^2\|_\infty$ and $\|Z-\mathbb E Z\|_\infty$.

We note first that $\|\mathbb E (Z-\mathbb E Z)^2\|_\infty \leq
\|\mathbb E Z^2\|_\infty$. Next, we set $V= Z+ \delta
\mathrm{diag}(X\otimes X)$ and $W=\delta \mathrm{diag}(X\otimes X)$.
Some easy algebra yields that
\begin{align}\label{interm-lem4-1}
  \|  \mathbb E Z^2 \|_\infty \leq \left( \sqrt{\| \mathbb E V^2 \|_\infty} + \sqrt{\|\mathbb E W^2\|_\infty}
  \right)^2.
\end{align}

We now treat $\| \mathbb E V^2 \|_\infty$ and $\|\mathbb E
W^2\|_\infty$ separately. Denote by $\mathbb E_{\delta}$ and
$\mathbb E_X$ the expectations w.r.t. $(\delta_1,\cdots,\delta_p)$
and $X$ respectively. We have $\mathbb E V^2 = \mathbb E_X \mathbb
E_\delta V^2$. Next, we have
\begin{eqnarray*}
\left(\mathbb E_\delta V^2\right)_{k,l} &=&
\begin{cases}
\delta^2 (X^{(k)})^2 |X|_2^2 &\text{if $k = l$},\\
\delta^3 X^{(k)}X^{(l)}|X|_2^2 &\text{otherwise}.
\end{cases}
\end{eqnarray*}
Consequently, we get for any $u=(u_1,\cdots,u_p)^{\top}\in \R^p$
with $|u|_2^2 = 1$ that
\begin{align}\label{interm-lem4-2}
\mathbb E u^{\top}V^2u &= \delta^2\left( \delta \mathbb E_X\left[
|X|_2^2 (X^\top u)^2 \right] + (1-\delta) \mathbb E_X \left[
\sum_{j=1}^p |X|_2^2(X^{(j)})^2 u_j^2\right]\right)\notag\\
&\leq \delta^2 \sqrt{\mathbb E_X |X|_2^4} \left(  \delta
\sqrt{\mathbb E_X (X^{\top}u)^4} + (1-\delta) \sum_{j=1}^p
\sqrt{\mathbb E_X (X^{(j)})^4}u_j^2 \right),
\end{align}
where we have applied Cauchy-Schwarz's inequality.


We have again by Cauchy-Schwarz's inequality and Assumption
\ref{assumption1} that
\begin{eqnarray*}
\mathbb E_X |X|_2^4
   &=& \sum_{j=1}^p\mathbb E (X^{(j)})^4 + \sum_{j, k = 1:j\neq k}^p
  \mathbb E (X^{(j)})^2(X^{(k)})^2\\
  &\leq& \sum_{j=1}^p\mathbb E (X^{(j)})^4 + \sum_{j, k = 1\,:\,j\neq k}^p
  \left[\mathbb E (X^{(j)})^4\right]^{1/2}\left[\mathbb E (X^{(k)})^4\right]^{1/2}\\
  &\leq& \left(\sum_{j=1}^p \sqrt{\mathbb E (X^{(j)})^4}\right)^2\\
  &\leq& C\left(\sum_{j=1}^p \|X^{(j)}\|_{\psi_2}^2\right)^2\\
  &\leq& Cc_1^{-2}\left(\mathrm{tr}(\Sigma)\right)^2,
\end{eqnarray*}
for some absolute constant $C>0$.
We have also, in view of (\ref{subexp1}), with the same absolute constant $C$ as above
$$
\sqrt{\mathbb E_X \langle X,u \rangle ^4} \leq C \|\langle X,u
\rangle\|_{\psi_2}^2 \leq C c_1^{-1}\|\Sigma\|_\infty,\quad \forall u \in
\R^p\;:\; |u|_2 = 1,
$$
and
$$
\sqrt{\mathbb E_X (X^{(j)})^4} \leq C \| X^{(j)}\|_{\psi_2}^2 \leq
C c_1^{-1}\Sigma_{jj},\quad 1\leq j \leq p.
$$

Combining the three above displays with (\ref{interm-lem4-2}), we
get
\begin{align}\label{interm-lem4-3}
\|\mathbb E V^2\|_\infty &\leq C c_1^{-1}\delta^2 \mathrm{tr}(\Sigma) \left[
\delta \|\Sigma\|_\infty + (1-\delta) \max_{1\leq j \leq p}
(\Sigma_{jj}) \right]\notag\\
&\leq C c_1^{-2}\delta^2 \mathrm{tr}(\Sigma)\|\Sigma\|_\infty,
\end{align}
and
\begin{align*}
\| \mathbb E W^2 \|_\infty &= \delta^2 \max_{1\leq j \leq p} \mathbb
E_X \left(  X^{(j)} \right)^4 \leq Cc_1^{-2}\delta^2 \max_{1\leq j \leq
p}(\Sigma_{jj}^2) \leq C c_1^{-2}\delta^2
\mathrm{tr}(\Sigma)\|\Sigma\|_\infty.
\end{align*}

Combining the two above displays with (\ref{interm-lem4-1}), we get
$$
\| \mathbb E Z^2 \|_\infty \leq  C c_1^{-2}\delta^2
\mathrm{tr}(\Sigma)\|\Sigma\|_\infty,
$$
where $C>0$ is an absolute constant.

Next, we treat $\|  Z - \mathbb E Z \|_\infty$. We have
\begin{align*}
\|Z-\mathbb E Z\|_\infty &\leq \|Z\|_\infty + \|\mathbb E
Z\|_\infty\leq |Y|_2^2 + \delta^2 \|\Sigma\|_\infty,
\end{align*}
where we have used that $\|Z\|_\infty \leq \|Y\otimes Y\|_\infty =
|Y|_2^2$ and
$$
\|\mathbb E Z\|_\infty = \|\Sigma^{(\delta)} -
\mathrm{diag}(\Sigma^{(\delta)})\|_\infty =\delta^2 \|\Sigma -
\mathrm{diag}(\Sigma)\|_\infty \leq \delta^2 \|\Sigma\|_\infty.
$$

In view of Assumption \ref{assumption1}, we have
$$
\left\||Y|_2^2\right\|_{\psi_{1}} \leq \sum_{j=1}^p
\left\|\delta_j(X^{(j)})^2\right\|_{\psi_1} \leq 2 \sum_{j=1}^p
\left\|X^{(j)}\right\|_{\psi_2}^2 \leq 2c_1^{-1}
\mathrm{tr}(\Sigma).
$$


Then, combining Proposition \ref{Bernstein} with a union bound
argument gives for any $t>0$
\begin{equation*}
\mathbb P \left( \max_{1\leq i \leq n}|Y_i|_2^2 \leq
\mathrm{tr}(\Sigma)\left( \delta + Cc_1^{-1}(t+\log n)\right)\right)
\geq 1-e^{-t},
\end{equation*}
where $C>0$ is an absolute constant.

Define
$$
U = \mathrm{tr}(\Sigma)\left( C^{-1}c_1\delta + t+\log n\right),
$$
and
$$t_1 = C' c_1^{-1}\max\left\{ \delta
\sqrt{\mathrm{tr}(\Sigma)\|\Sigma\|_\infty}\sqrt{\frac{t + \log
(2p)}{n}}\,, \ \ \mathrm{tr}(\Sigma)\left( c_1\delta + t+\log
n\right) \frac{t + \log (2p)}{n} \right\},
$$
where $C'>0$ is a large enough absolute constant.

We have
\begin{align*}
\mathbb P \left(  \| A_n^{(\delta)} -  A^{(\delta)}\|_\infty \geq
t_1 \right) &\leq \mathbb P \left(  \left\{\| A_n^{(\delta)} -
A^{(\delta)}\|_\infty \geq t_1 \right\} \cap \bigcap_{i=1}^n
\left\{|Y_i|_2^2 \leq U\right\} \right)\\
&\hspace{5cm}+ \mathbb P \left( \bigcup_{i=1}^n \left\{|Y_i|_2^2 >
U\right\} \right)\\
&\leq  \mathbb P \left( \| A_n^{(\delta)} - A^{(\delta)}\|_\infty
\geq t_1 \mid  \bigcap_{i=1}^n \left\{|Y_i|_2^2 \leq U\right\}
\right)+e^{-t}\\
&\leq 2 e^{-t},
\end{align*}
where we have used Proposition \ref{prop:Bernstein_bounded} to get
that
\begin{align*}
\mathbb P \left(  \left\{\| A_n^{(\delta)} - A^{(\delta)}\|_\infty >
t_1 \right\} \mid \bigcap_{i=1}^n \left\{|Y_i|_2^2 \leq U\right\}
\right) \leq e^{-t}.
\end{align*}

\end{proof}

\begin{lemma}\label{lem5}
Under the assumptions of Proposition \ref{lem1}, we have with
probability at least $1-e^{-t}$ that
\begin{equation}
|\mathrm{tr}(\Sigma_n^{(\delta)}) - \delta\mathrm{tr}(\Sigma)|\leq C
c_1^{-1} \mathrm{tr}(\Sigma) \max\left( \sqrt{\frac{t}{n}},
\frac{t}{n}\right),
\end{equation}
where $C>0$ is an absolute constant.
\end{lemma}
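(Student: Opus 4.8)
The plan is to observe that $\mathrm{tr}(\Sigma_n^{(\delta)})$ is an average of i.i.d.\ \emph{scalar} sub-exponential random variables, so the bound follows directly from the scalar Bernstein inequality, Proposition~\ref{Bernstein}. Since $\delta_{i,j}^2=\delta_{i,j}$ for Bernoulli variables,
\[
\mathrm{tr}\bigl(\Sigma_n^{(\delta)}\bigr)=\frac1n\sum_{i=1}^n\mathrm{tr}(Y_i\otimes Y_i)=\frac1n\sum_{i=1}^n|Y_i|_2^2=\frac1n\sum_{i=1}^n\sum_{j=1}^p\delta_{i,j}\bigl(X_i^{(j)}\bigr)^2 ,
\]
and, since the $\delta_{i,j}$ are independent of the $X_i$, $\E|Y_i|_2^2=\delta\sum_{j=1}^p\E(X^{(j)})^2=\delta\,\mathrm{tr}(\Sigma)$. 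Hence $\mathrm{tr}(\Sigma_n^{(\delta)})-\delta\,\mathrm{tr}(\Sigma)=\frac1n\sum_{i=1}^n\bigl(|Y_i|_2^2-\E|Y_i|_2^2\bigr)$ is a centered average of i.i.d.\ terms, and it remains only to control their $\psi_1$-norm.

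For that, exactly as in the proof of Lemma~\ref{lem4}, I would combine the triangle inequality for the $\psi_1$-norm, the sub-gaussianity of $\delta_j X^{(j)}$, the inequality $\|V^2\|_{\psi_1}\le 2\|V\|_{\psi_2}^2$, and the lower bound (\ref{subexp1}) of Assumption~\ref{assumption1} along coordinate directions (which gives $\|X^{(j)}\|_{\psi_2}^2\le c_1^{-1}\Sigma_{jj}$), to obtain
\[
\bigl\||Y|_2^2\bigr\|_{\psi_1}\le\sum_{j=1}^p\bigl\|\delta_j\bigl(X^{(j)}\bigr)^2\bigr\|_{\psi_1}\le 2\sum_{j=1}^p\bigl\|X^{(j)}\bigr\|_{\psi_2}^2\le\frac{2}{c_1}\sum_{j=1}^p\Sigma_{jj}=\frac{2}{c_1}\,\mathrm{tr}(\Sigma) .
\]
Centering costs only an absolute multiplicative constant, so $\bigl\||Y_i|_2^2-\E|Y_i|_2^2\bigr\|_{\psi_1}\le Cc_1^{-1}\mathrm{tr}(\Sigma)$ for every $i$. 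Applying Proposition~\ref{Bernstein} with $K=Cc_1^{-1}\mathrm{tr}(\Sigma)$ then gives, for every $t\ge0$, with probability at least $1-e^{-t}$,
\[
\bigl|\mathrm{tr}(\Sigma_n^{(\delta)})-\delta\,\mathrm{tr}(\Sigma)\bigr|=\Bigl|\frac1n\sum_{i=1}^n\bigl(|Y_i|_2^2-\E|Y_i|_2^2\bigr)\Bigr|\le Cc_1^{-1}\mathrm{tr}(\Sigma)\Bigl(\sqrt{\tfrac tn}\vee\tfrac tn\Bigr) ,
\]
which is exactly the claimed inequality.

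I do not expect any genuine obstacle here: this is the mildest of the three lemmas feeding into Proposition~\ref{lem1}, because the trace is a \emph{linear} functional of the data, so neither the noncommutative Bernstein inequality (Proposition~\ref{prop:Bernstein_bounded}) nor the truncation-by-$\max_i|Y_i|_2^2$ device of Lemma~\ref{lem4} is needed. The only point requiring a little care is the $\psi_1$ estimate for $|Y|_2^2$: it should be routed through the triangle inequality over coordinates rather than attempted directly, and (\ref{subexp1}) must be used to replace $\|X^{(j)}\|_{\psi_2}^2$ by $\Sigma_{jj}$ so that the coordinate sum collapses to $\mathrm{tr}(\Sigma)$.
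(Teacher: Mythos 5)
Your proposal is correct and follows essentially the same route as the paper: write $\mathrm{tr}(\Sigma_n^{(\delta)})-\delta\,\mathrm{tr}(\Sigma)$ as the centered average of the scalars $|Y_i|_2^2$, bound $\||Y|_2^2\|_{\psi_1}$ coordinatewise via $\|V^2\|_{\psi_1}\le 2\|V\|_{\psi_2}^2$ and Assumption~\ref{assumption1} to get $2c_1^{-1}\mathrm{tr}(\Sigma)$, and apply the scalar Bernstein inequality of Proposition~\ref{Bernstein}. Your remark that centering only costs an absolute constant is, if anything, slightly more careful than the paper, which bounds the centered $\psi_1$-norm directly by the uncentered one.
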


\begin{proof}
In view of Assumption \ref{assumption1}, we have for any $1\leq j
\leq p$ that $\|(Y^{(j)})^2\|_{\psi_1} \leq
\|(X^{(j)})^2\|_{\psi_{1}} \leq 2 \|X^{(j)}\|_{\psi_2}^2 \leq
2c_1^{-1}\Sigma_{jj}$ and
$$
\left\||Y|_2^2\right\|_{\psi_{1}} \leq \sum_{j=1}^p
\left\|(Y^{(j)})^2\right\|_{\psi_1} \leq
\left\|(X^{(j)})^2\right\|_{\psi_1} \leq 2 \sum_{j=1}^p
\left\|X^{(j)}\right\|_{\psi_2}^2 \leq 2c_1^{-1}
\mathrm{tr}(\Sigma).
$$

Next, we have
\begin{eqnarray*}
  \mathrm{tr}(\Sigma_n^{(\delta)}) -\delta\mathrm{tr}(\Sigma) &=& \mathrm{tr}(\Sigma_n^{(\delta)} - \Sigma^{(\delta)})\\
  &=& \mathrm{tr}\left(\frac{1}{n}\sum_{i=1}^n X_i\otimes X_i -\mathbb E (X\otimes
  X)\right)\\
  &=& \frac{1}{n}\sum_{i=1}^n \mathrm{tr}(Y_i\otimes Y_i) -\mathbb E \left(\mathrm{tr}(Y\otimes Y)\right)\\
  &=& \frac{1}{n}\sum_{i=1}^n |Y_i|_{2}^2 -\mathbb E  |Y|_2^2.
\end{eqnarray*}
Next, we have
$$
\| |Y_i|_{2}^2 -\mathbb E  |Y|_2^2 \|_{\psi_1}\leq \| |Y_i|_{2}^2
\|_{\psi_1} \leq 2c_1^{-1} \mathrm{tr}(\Sigma).
$$
Then we can apply Proposition \ref{Bernstein} to get the result.
\end{proof}

\subsection{Proof of Lemma \ref{lem-lambda-datadriven}}
In view of Proposition \ref{lem1}, we have on an event $\mathcal A$
of probability at least $1-\frac{1}{2p}$ that
\begin{equation}\label{lem-lambda-datadriven-interm1}
\|\tilde \Sigma_n - \Sigma\|_\infty\leq
C\frac{\|\Sigma\|_\infty}{c_1} \sqrt{\frac{\mathbf{r}(\Sigma)\log
2p}{\delta^2 n}}.
\end{equation}
We assume further that (\ref{measurements}) is satisfied with a
sufficiently large constant $c$ so that we have, in view of
(\ref{prop1-bound-1}) and (\ref{prop1-bound-2}), on the same event
$\mathcal A$ that
$$
\|\tilde \Sigma_n - \Sigma\|_\infty\leq \frac{\|\Sigma\|_\infty}{2}
$$
and
$$
|\mathrm{tr}(\tilde \Sigma_n) - \mathrm{tr}(\Sigma) | \leq
\frac{\mathrm{tr}(\Sigma)}{2}.
$$
We immediately get on the event $\mathcal A$ that
$$
\frac{1}{2}\|\Sigma\|_\infty \leq  \|\tilde \Sigma_n\|_\infty\leq
\frac{3}{2}\|\Sigma\|_\infty,
$$
and
$$
\frac{1}{2}\mathrm{tr}(\Sigma) \leq \mathrm{tr}(\tilde\Sigma_n) \leq
\frac{3}{2}\mathrm{tr}(\Sigma).
$$
Combining these simple facts with
(\ref{lem-lambda-datadriven-interm1}), we get the result.

\subsection{Proof of Theorem \ref{theomain2}}

This proof uses standard tools of the minimax theory (cf. for
instance \cite{tsy_09}). However, as for Proposition \ref{lem1}, the
proof with missing observations ($\delta<1$) is significantly more
technical as compared to case of full observations ($\delta=1$). In
particular, the control of the Kullback-Leibler divergence requires
a precise description of the conditional distributions of the random
variables $Y_1,\ldots, Y_n$ given the masked variables
$\delta_1,\ldots,\delta_n$. To our knowledge, there exists no
minimax lower bound result for statistical problem with missing
observations in the literature.

\begin{proof}

Set $\gamma = a/\sqrt{\delta^2 n}$ where $a>0$ is a sufficiently
small absolute constant.

We consider first the case $r\geq 2$. Define
\begin{equation*}
  \mathcal{N} = \left\{ E_{k,l} + E_{l,k},1 \leq k\leq r-1,\, k+1\leq
  l \leq r \right\}. 
\end{equation*}
Set $B_{k,l} = E_{k,l} + E_{l,k}$ for any $1 \leq k\leq r-1,\,
k+1\leq  l \leq r$. Consider the associated set of symmetric
matrices
\begin{equation*}
  \mathcal{B}(\mathcal N) = \left\{  \Sigma_\epsilon =
  \left(\begin{array}{c|c}
I_r+ \gamma\sum_{k=1}^{r-1}\sum_{l=k+1}^{r}\epsilon_{k,l} B_{k,l} & O \\
\hline O & O
  \end{array}\right),\; \epsilon= (\epsilon_{k,l})_{k,l} \in \{0,1\}^{\frac{r(r-1)}{2}}\right\},
\end{equation*}
Note that any matrix $\Sigma_\epsilon \in \mathcal{B}(\mathcal N)$
is positive-semidefinite if $0<a<1$ since we have by assumption
\begin{align}\label{spectraltest}
\left\|\gamma\sum_{k=1}^{r-1}\sum_{l=k+1}^{r}\epsilon_{k,l} B_{k,l}\right\|_\infty\leq \gamma r = a\sqrt{\frac{r^2}{\delta^2 n}}
\leq a.
\end{align}

By construction, any element of $\mathcal B(\mathcal N)$ as
well as the difference of any two elements of $\mathcal B(\mathcal
N)$ is of rank exactly $r$. Consequently, $\mathcal B(\mathcal
N)\subset \mathcal C_r$ since $\mathbf{r}(\Sigma_\epsilon)\leq
\mathrm{rank}(\Sigma_\epsilon) \leq r$ for any $\Sigma_\epsilon \in
\mathcal B(\mathcal N)$. Note also that for any $\Sigma_\epsilon\in
\mathcal B(\mathcal N)$, we have $\mathrm{tr}(\Sigma_\epsilon) = r$
and $ 0<1-a \leq \|\Sigma_\epsilon\|_\infty \leq 1+a$ provided that
$0<a<1$ and consequently $r/(1+a)\leq \mathbf{r}(\Sigma_\epsilon)
\leq r/(1-a)$. Indeed, we have
\begin{align*}
\|\Sigma_\epsilon \|_{\infty} &\leq 1+ \gamma
\left\|\sum_{k=1}^{r-1}\sum_{l=k+1}^{r}\epsilon_{k,l}
B_{k,l}\right\|_\infty\leq 1+ \gamma r\leq 1+a,
\end{align*}
in view of the condition $n\geq \delta^{-2}r^2$. A similar reasoning
gives the lower bound.

Denote by $A_0$ the $p\times p$ block matrix with first block equal
to $I_r$. Varshamov-Gilbert's bound (cf. Lemma 2.9 in \cite{tsy_09})
guarantees the existence of a subset $\mathcal
A^0\subset\mathcal{B}(\mathcal{N})$ with cardinality
$\mathrm{Card}(\mathcal A^0) \geq 2^{r(r-1)/16}+1$ containing $A_0$
and such that, for any two distinct elements $\Sigma_\epsilon$ and
$\Sigma_{\epsilon'}$ of $\mathcal A^0$, we have
\begin{align}\label{lower_2}
\Arrowvert \Sigma_\epsilon-\Sigma_{\epsilon'}\Arrowvert_{2}^2 &\geq
\gamma^2\frac{r(r-1)}{8}\geq
\gamma^2\frac{r^2}{16}=\frac{a^2}{16}\frac{r^2}{\delta^2 n}\notag\\
 &\geq
\frac{(1-a)a^2}{16(1+a)^2}\|\Sigma_{\tilde\epsilon}\|_\infty^2\frac{\mathbf{r}(\Sigma_{\tilde\epsilon})}{\delta^2
n}\mathrm{rank}(\Sigma_{\tilde\epsilon}),\quad\forall
\Sigma_{\tilde\epsilon}\in \mathcal A^0.
\end{align}

Let $X_1,\ldots,X_n\in\R^p$ be i.i.d.
$N\left(0,\Sigma_\epsilon\right)$ with $\Sigma_\epsilon\in \mathcal
A^{0}$. For the sake of brevity, we set $\Sigma =\Sigma_\epsilon$.
Recall that $\delta_1,\ldots,\delta_n$ are random vectors in $\R^p$
whose entries $\delta_{i,j}$ are i.i.d. Bernoulli entries with
parameter $\delta$ independent from $(X_1,\cdots,X_n)$ and that the
observations $Y_1,\ldots,Y_n$ satisfy $Y^{(j)}_i =
\delta_{ij}X_{i}^{(j)}$. Denote by $\mathbb P_{\Sigma}$ the
distribution of $(Y_1,\cdots,Y_n)$ and by $\mathbb
P_{\Sigma}^{(\delta)}$ the conditional distribution of
$(Y_1,\cdots,Y_n)$ given $(\delta_{1},\cdots,\delta_n)$. Next, we
note that, for any $1\leq i \leq n$, the conditional random variables
$Y_i\mid \delta_{i}$ are independent Gaussian vectors
$N(0,\Sigma^{(\delta_i)})$ where
\begin{equation}\label{proof-theo2-interm1}
(\Sigma^{(\delta_i)})_{j,k} =
\begin{cases}
\delta_{i,j}\delta_{i,k}\Sigma_{j,k}&\text{if $j\neq k$},\\
\delta_{i,j}\Sigma_{j,j}&\text{otherwise}.
\end{cases}
\end{equation}

Thus, we have $\P_{\Sigma}^{(\delta)} = \otimes_{i=1}^n
\P_{\Sigma^{(\delta_i)}}$. Denote respectively by $\P_\delta$ and $\E_\delta$ the
probability distribution  of $(\delta_1,\cdots,\delta_n)$ and the
associated expectation, and by $\E_{\delta_i}$ the expectation w.r.t $\delta_i$ for any $1\leq i \leq n$. We also denote by $\E_{\Sigma}$
and $\E^{(\delta)}_\Sigma$ the expectation and conditional
expectation associated respectively with $\P_\Sigma$ and
$\P_{\Sigma}^{(\delta)}$.

Next, the Kullback-Leibler divergences $K\big(\P_{{
A_0}},\P_{\Sigma}\big)$ between $\P_{{A_0}}$ and $\P_{\Sigma}$
satisfies
\begin{align}\label{proof-theo2-interm2}
K\left( \P_{{ A_0}},\P_{\Sigma} \right)& = \mathbb E_{A_0}\log
\left(\frac{d\P_{A_0}}{d\P_{\Sigma}}\right) = \mathbb E_{A_0}\log
\left(\frac{d(\P_{\delta}\otimes
\P_{A_0}^{(\delta)})}{d(\P_{\delta}\otimes
\P_{\Sigma}^{(\delta)})}\right) =\E_{\delta}\E_{A_0}^{(\delta)}\log
\left(\frac{d \P_{A_0}^{(\delta)}}{d
\P_{\Sigma}^{(\delta)}}\right)\notag\\
&= \E_{\delta}K\left( \P_{ A_0}^{(\delta)},\P_{\Sigma}^{(\delta)}
\right)= \sum_{i=1}^n \E_{\delta_i} K\left( \P_{
A_0^{(\delta_i)}},\P_{\Sigma^{(\delta_i)}} \right).
\end{align}

Using that $Y_i\mid \delta_i \sim N(0,\Sigma^{(\delta_i)})$ with
$\Sigma^{(\delta_i)}$ defined in (\ref{proof-theo2-interm1}), we get
for any $1\leq i \leq n$, any $\Sigma\in \AA^0$ and any realization
$\delta_i(\omega)\in \{0,1\}^p$ that
\begin{enumerate}
\item $\P_{\Sigma^{(\delta_i(\omega))}}\ll
\P_{A_0^{(\delta_i(\omega))}}$ and hence
$K\big(\P_{{A_0^{(\delta_i(\omega))}}},\P_{\Sigma^{(\delta_i(\omega))}}\big)<\infty$.
\item $\P_{\Sigma^{(\delta_i(\omega))}}$ and $
\P_{A_0^{(\delta_i(\omega))}}$ are supported on a $d_i(\omega)$-dimensional subspace
of $\R^p$ where $d_i= \sum_{j=1}^r \delta_{i,j}\sim
\mathrm{Bin}(r,\delta)$.
\end{enumerate}
Define $J_i = \left\lbrace  j\,:\, \delta_{i,j}=1,\, 1\leq j \leq r
\right\rbrace$. Define the mapping $P_i:\R^p\rightarrow \R^{d_i}$ as
follows $P_i(x)= x_{J_i}$ where for any
$x=(x^{(1)},\cdots,x^{(p)})^\top\in\R^p$, $x_{J_i}\in \R^{d_i}$ is
obtained by keeping only the components $x^{(k)}$ with their index
$k\in J_i$. We denote  by $P_i^{*}\,: \R^{d_i} \rightarrow \R^{p}$
the right inverse of $P_i$.

We note that $P_i A_0^{(\delta_i)}P_i^{*} = I_{d_i}$ and
\begin{align*}
 P_i\Sigma^{(\delta_i)}P_i^{*} &= I_{d_i} + \gamma
\sum_{k=1}^{r-1}\sum_{l=k+1}^{r}\epsilon_{k,l} P_i
B_{k,l}P_i^{*}\1_{k\in J_i}\1_{l\in J_i}\\
&=  I_{d_i} +  W_i.
\end{align*}

Thus we get that
\begin{align*}
K\big(\P_{{A_0^{(\delta_i)}}},\P_{\Sigma^{(\delta_i)}}\big) &= K\big(\P_{I_{d_i}},\P_{ I_{d_i} +  W_i}\big)  \\
&= \frac{1}{2}\mathrm{tr}\left(I_{d_i} +  W_i\right)
-\frac{1}{2}\log\left(\det\left(I_{d_i} +  W_i\right)\right) -
\frac{d_i}{2}.
\end{align*}
Denote by $\lambda_1,\ldots,\lambda_{d_i}$ the eigenvalues of $W_i$.
Note that $|\lambda_j|<1/2$ for any $j=1,\ldots,d_i$ in view of (\ref{spectraltest}) if $a<1/2$. We get,
using the inequality $x-\log(1+x) \leq x^2$ for any $x>-1/2$, that
\begin{align}\label{KLdiv}
K\big(\P_{{A_0^{(\delta_i)}}},\P_{\Sigma^{(\delta_i)}}\big) \ &\leq
\frac{1}{2}\sum_{j=1}^{d_i} \lambda_j^2\notag\\
&\leq \frac{1}{2}\|W_i\|_2^2\notag\\
&\leq \frac{1}{2}\gamma^2
\sum_{k=1}^{r-1}\sum_{l=k+1}^r\|B_{k,l}\|_2^2\1_{k\in
J_i}\1_{l\in J_i}\notag\\
&\leq \gamma^2( d_i^2 - d_i).
\end{align}
Taking the expectation w.r.t. to $\delta_i$ in the above display, we
get for any $1\leq i \leq n$ that
\begin{align*}
\E_{\delta_i} K\left( \P_{
A_0^{(\delta_i)}},\P_{\Sigma^{(\delta_i)}} \right) \leq 
\gamma^2 \E_{\delta_i}\left( d_i^2 - d_i\right)\leq 
\gamma^2 \delta^2 r(r-1),
\end{align*}
since $d_i\sim \mathrm{Bin}(r,\delta)$. Combining the above display
with (\ref{proof-theo2-interm2}), we get
\begin{equation*}
K\left( \P_{{ A_0}},\P_{\Sigma} \right) \leq n \gamma^2
\delta^2r(r-1) = n a^2\frac{1}{\delta^2 n} \delta^2 r(r-1)
\leq a^2r(r-1).
\end{equation*}
Thus, we deduce from the above display that the condition
\begin{equation}\label{eq: condition C}
\frac{1}{\mathrm{Card}(\AA^0)-1}
\sum_{\Sigma\in\AA^0\setminus\{A_0\}}K(\P_{\bf A_0},\P_{\Sigma})\
\leq\ \alpha \log \big(\mathrm{Card}(\AA^0)-1\big)
\end{equation}
is satisfied for any $\alpha>0$ if $a>0$ is chosen as a sufficiently
small numerical constant depending on $\alpha$. In view of
(\ref{lower_2}) and (\ref{eq: condition C}), (\ref{eq:lower1}) now
follows by application of Theorem 2.5 in \cite{tsy_09}.

The lower bound (\ref{eq:lower2}) follows from (\ref{eq:lower1}) by
the following simple argument. Consider the set of matrices
$\mathcal A^0$. For any two distinct matrices $\Sigma_1,\Sigma_2$ of
$\mathcal A^0$, we have
\begin{equation}\label{eq:lower_spectral_1}
  \|\Sigma_1- \Sigma_2\|_\infty\geq  \sqrt{\frac{(1-a)a^2}{16(1+a)^2}}\|\Sigma_{\tilde\epsilon}\|_\infty \sqrt{\frac{\mathbf{r}(\Sigma_{\tilde\epsilon})}{\delta^2
n}},\,\forall \Sigma_{\tilde\epsilon}\in \mathcal A^0.
\end{equation}
Indeed, if (\ref{eq:lower_spectral_1}) does not hold, we get
$$
 \|\Sigma_1- \Sigma_2\|_2^2 <
\frac{(1-a)a^2}{16(1+a)^2}\|\Sigma_{\tilde\epsilon}\|_\infty^2\frac{\mathbf{r}(\Sigma_{\tilde\epsilon})}{\delta^2
n}\mathrm{rank}(\Sigma_{\tilde\epsilon}),\,\forall
\Sigma_{\tilde\epsilon}\in \mathcal A^0,
$$
since $\mathrm{rank}(\Sigma_1- \Sigma_2)\leq r$ by construction of
$\mathcal A^0$. This contradicts (\ref{lower_2}).

Next, (\ref{eq: condition C}) is satisfied for any $\alpha>0$ if $a
>0$ is chosen as a sufficiently small numerical constant depending
on $\alpha$.

Combining (\ref{eq:lower_spectral_1}) with (\ref{eq: condition C})
and Theorem 2.5 in \cite{tsy_09} gives the result.

The case $r=1$ can be treated similarly and is actually easier. Indeed if $\mathbf{r}(\Sigma)=1$, then we have $\mathrm{tr}(\Sigma) = \|\Sigma\|_\infty$ and $\mathrm{rank}(\Sigma) = 1$. Consequently, we can derive the lower bound by testing between the two hypothesis
\begin{equation*}
\Sigma_0 =\left(\begin{array}{c|c}
1&O\\
\hline
O &O
\end{array}\right)\quad \text{and}\quad \Sigma_1 =\left(\begin{array}{c|c}
1+\gamma&O\\
\hline
O &O
\end{array}\right).
\end{equation*}
where $\Sigma_0$ and $\Sigma_1$ are $p\times p$ covariance matrices with only one nonzero component on the first diagonal entry. For these covariance matrices, we have $\mathrm{tr}(\Sigma_0) = \|\Sigma_0\|_\infty = 1$ and $\mathrm{tr}(\Sigma_1) = \|\Sigma_1\|_\infty = 1+\gamma \leq 2$.
Thus we have
$$\|\Sigma_0 -\Sigma_1\|_\infty^{2} =\|\Sigma_0 -\Sigma_1\|_2^{2}\geq \frac{a^2}{\delta^2 n} \geq  c \|\Sigma_i\|_\infty^2 \frac{\mathbf{r}(\Sigma_i)}{\delta^2 n},\quad i=0,1$$
for some absolute constant $c>0$. The rest of the proof is identical
to the case $r\geq 2$.
\end{proof}

\section*{Acknowledgment:} I wish to thank Professor Vladimir
Koltchinskii for suggesting this problem and the observation
(\ref{Sigmarecons}).

\footnotesize{
\bibliographystyle{plain}
\bibliography{panel}
}

\end{document}